\newcommand{\beqa}{\begin{eqnarray*}}
\newcommand{\eeqa}{\end{eqnarray*}}
\newcommand{\beqn}{\begin{eqnarray}}
\newcommand{\eeqn}{\end{eqnarray}}
\newcommand{\iy}{\infty}
\newcommand{\lt}{\left}
\newcommand{\rt}{\right}
\newcommand{\bQ}{\mathbb Q}
\newcommand{\C}{\mathbb C}
\newcommand{\R}{\mathbb R}
\newcommand{\N}{\mathbb N}
\newcommand{\Ha}{\mathbb H}
\newcommand{\mcD}{\mathcal D}
\newcommand{\mcE}{\mathcal E}
\newcommand{\mcS}{\mathcal S}
\newcommand{\mfM}{\mathfrak M}
\newcommand{\f}{\frac}
\newcommand{\tf}{\tfrac}
\newcommand{\al}{\alpha}
\newcommand{\G}{\Gamma}
\newcommand{\e}{\varepsilon}
\newcommand{\ph}{\phi}
\newcommand{\De}{\Delta}
\newcommand{\la}{\lambda}
\newcounter{cnt1}
\newcounter{cnt2}
\newcounter{cnt3}
\newcommand{\blr}{\begin{list}{$($\roman{cnt1}$)$}
 {\usecounter{cnt1} \setlength{\topsep}{0pt}
 \setlength{\itemsep}{0pt}}}
\newcommand{\bla}{\begin{list}{$($\alph{cnt2}$)$}
 {\usecounter{cnt2} \setlength{\topsep}{0pt}
 \setlength{\itemsep}{0pt}}}
\newcommand{\bln}{\begin{list}{$($\arabic{cnt3}$)$}
 {\usecounter{cnt3} \setlength{\topsep}{0pt}
 \setlength{\itemsep}{0pt}}}
\newcommand{\el}{\end{list}}
\newtheorem{thm}{Theorem}[section]
\newtheorem{lem}[thm]{Lemma}
\newtheorem{Def}[thm]{Definition}
\newtheorem{rem}[thm]{Remark}
\newcommand{\Rem}{\begin{rem} \rm}
\newcommand{\bdfn}{\begin{Def} \rm}
\newcommand{\edfn}{\end{Def}}
\newcommand{\ba}{\begin{array}}
\newcommand{\ea}{\end{array}}
\numberwithin{equation}{section}
\date{}
\begin{document}
\title{\bf The Jones Strong Distribution Banach Spaces}
\author[Gill]{Tepper L. Gill}
\address[Tepper L. Gill]{ Department of Mathematics, Physics and E\&CE, Howard University\\
Washington DC 20059 \\ USA, {\it E-mail~:} {\tt tgill@howard.edu}}
\date{}
\subjclass{Primary (45) Secondary(46) }
\keywords{Banach spaces, test functions, distributions, Navier-Stokes equation }
\maketitle
\begin{abstract}  In this note, we introduce a new class of separable Banach spaces, ${SD^p}[{\mathbb{R}^n}],\;1 \leqslant p \leqslant \infty$, which contain each $L^p$-space as a dense continuous and compact embedding.  They also contain the nonabsolutely integrable functions and the space of test functions ${\mathcal{D}}[{\mathbb{R}^n}]$, as dense continuous embeddings. These spaces have the remarkable property that, for any multi-index $\al, \; \left\| {{D^\alpha }{\mathbf{u}}} \right\|_{SD} = \left\| {\mathbf{u}} \right\|_{SD}$, where $D$ is the distributional derivative. We call them Jones strong distribution Banach spaces because of the crucial role played by two special functions introduced in his book (see \cite{J}, page 249).  After constructing the spaces, we discuss their basic properties and their relationship to ${\mathcal{D}}[{\mathbb{R}^n}]$ and ${\mathcal{D'}}[{\mathbb{R}^n}]$. As an application, we obtain new a priori bounds for the  Navier-Stokes equation. 
\end{abstract}
\section*{Introduction}
The theory of distributions is based on the action of linear functionals on a space of test functions. In the original approach of Schwartz \cite{SC}, both the test functions and the linear functionals have a natural topological vector space structure, which is not normable. For those interested in applications, this is an inconvenience, requiring additional study and effort.  Thus, in most applied contexts, the restricted class of Banach spaces due to Sobolev have proved useful (see Leoni \cite{GL}).  In this case, the base space of functions are of Lebesgue type, which also have some limitations.  The study of problems in the foundations of relativistic quantum theory, path integrals and nonlinear analysis have led to the need for a Banach space structure for nonabsolutely integrable functions.  Recent research has uncovered a new class of separable Banach spaces, which are natural for the class of nonabsolutely integrable functions and have provided the first rigorous foundation for the Feynman path integral formulation of quantum mechanics (see \cite{GZ}).  The purpose of this note is to introduce a another class of Banach spaces which contain the nonabsolutely integrable functions, but also contains the Schwartz test function space as dense and continuous embeddings.
\section{The Jones Spaces}
We begin with the construction of a special class of functions in $\C_c^{\iy}[\R^n]$ (see Jones, \cite{J} page 249). 
\subsection{The remarkable Jones functions}  
\begin{Def} For $x \in \R, \ 0 \le y < \iy$ and $1<a< \iy$, define the Jone's functions  $g(x, y), \ h(x)$ by:
\[
  g(x,y) = \exp \left\{ { - y^a e^{iax} } \right\},
\]
\[ 
 h(x)=\left\{
\begin{array}{ll}
\displaystyle{\int_0^\infty g(x,y) dy}, & x\in  (-\frac{\pi}{2a},\frac{\pi}{2a}) \\
~&~\\
0, & \mbox{otherwise.}
\end{array}
\right.\]
\end{Def}
The following properties of $g$ are easy to check:
\begin{enumerate}
\item 
\[
\frac{{\partial g(x,y)}}
{{\partial x}} =  - iay^a e^{iax} g(x,y),
\]
\item
\[
\frac{{\partial g(x,y)}}
{{\partial y}} =  - ay^{a - 1} e^{iax} g(x,y),
\]
so that
\item
\[
iy\frac{{\partial g(x,y)}}
{{\partial y}} = \frac{{\partial g(x,y)}}
{{\partial x}}.
\]
\end{enumerate}
It is also easy to see that $h(x) \in {{L}}^1[- \tf{\pi}{2a}, \tf{\pi}{2a}]$ and,
\beqn
\frac{{dh(x)}}
{{dx}} = \int_0^\infty  {\frac{{\partial g(x,y)}}
{{\partial x}}dy}  = \int_0^\infty  {iy\frac{{\partial g(x,y)}}
{{\partial y}}dy}.
\eeqn
Integration by parts in the last expression in (1.1) shows that $h'(x) =  - ih(x)$, so that $
h(x) = h(0)e^{ - ix}$ for $x \in (- \tf{\pi}{2a}, \tf{\pi}{2a})$.  Since $
h(0) = \int_0^\infty  {\exp \{  - y^a \} dy}$, an additional integration by parts shows that $h(0)= \G(\tf{1}{a} +1)$. For each $k \in \N$ let $a=a_k= 3\times 2^{k-1},\; h(x)=h_k(x), \ x \in (- \tf{\pi}{2a_k}, \tf{\pi}{2a_k})$ and set $\e_k=\tf{\pi}{4a_k}$. 

  Let $\bQ$ be the set of rational numbers in $\R$ and for each $x^i \in \bQ$,  define
\beqa
f_k^{i} (x) = f_k(x-x^i)=\left\{ {\begin{array}{*{20}c}
   {c_k \exp \left\{ {\frac{{\varepsilon _k^2 }}
{{\left| {x - x^i} \right|^2  - \varepsilon _k^2 }}} \right\},\quad \left| {x - x^i } \right| < \varepsilon _k ,}  \\
   {0,\quad \quad \quad \quad \quad \quad \quad \quad \quad \quad \quad \left| {x - x^i } \right| \geqslant \varepsilon _k ,}  \\

 \end{array} } \right.
\eeqa
where $c_k$ is the standard normalizing constant.  It is clear that the support, ${\rm{spt}}(f_k^{i}) \subset [-\e_k, \e_k]=[-\tf{\pi}{4a_k}, \tf{\pi}{4a_k}]=I_k^i$.

Now set $\chi _k^i (x) =  (f_k^i  * h_k)(x)$, so that ${\rm{spt}}(\chi _k^i ) \subset [-\tf{\pi}{2^{k+1}}, \tf{\pi}{2^{k+1}}]$.  For $x \in {\rm{spt}}(\chi _k^i )$, we can also write $\chi _k^i (x)=\chi _k(x-x^i)$ as:
\[
\begin{gathered}
\chi _k^i (x)=  \int_{ I_k^i }  {{f_k}\left[ {\left( {x - {x^i}} \right) - z} \right]{h_k}(z)dz}  \hfill \\
   = \int_{ I_k^i }  {{h_k}\left[ {\left( {x - {x^i}} \right) - z} \right]{f_k}(z)dz}  = {e^{-i\left( {x - {x^i}} \right)}}\int_{ I_k^i }  {{e^{iz}}{f_k}(z)dz} . \hfill \\ 
\end{gathered} 
\]
Thus, if $\alpha_{k,i}  = \int_{I_k^i}  {e^{iz} f_k^i (z)dz} $, we can now define:
\[
\xi _k^i (x) =\al_{k,i}^{-1} \chi_k^i (-x)= \left\{ {\begin{array}{*{20}c}
   {\f{1}{n} {e^{i(x-x^i)}} ,\;x \in {I_k^i} }  \\
   {0,\;\,x \notin {I_k^i} },  \\

 \end{array} } \right.
\]
so that $\lt|\xi _k^i (x)\rt| < \tf{1}{n}$.
\subsection{The Construction}
To construct our space on $\R^n$, let $\bQ^n$ be the set of all vectors ${\mathbf{x}}$ in $ {\mathbb{R}}^n$, such that for each $j$, the component $x_j$ is rational.  Since this is a countable dense set in ${\mathbb{R}}^n $, we can arrange it as $\mathbb{Q}^n  = \left\{ {{\mathbf{x}}^1, {\mathbf{x}}^2, {\mathbf{x}}^3,  \cdots } \right\}$.  For each $k$ and $i$, let ${\mathbf{B}}_k ({\mathbf{x}}^i ) $ be the closed cube centered at ${\mathbf{x}}^i$ with edge $\tfrac{\pi }{{a_k }}$ and diagonal of length $r_k=\tfrac{\pi }{{a_k}}\sqrt n$.  

We choose the natural order which maps $\mathbb{N} \times \mathbb{N}$ bijectively to $\mathbb{N}$:
\[
\{(1,1), \ (2,1), \ (1,2), \ (1,3), \  (2,2), \  (3,1), \ (3,2), \ (2,3), \  \cdots \}
\]
 and let $\left\{ {{\mathbf{B}}_{m} ,\;m \in \mathbb{N}}\right\}$ be the set of closed cubes 
${\mathbf{B}}_k ({\mathbf{x}}^i )$ with $(k,i) \in \mathbb{N} \times \mathbb{N}$ and ${\bf{x}^i} \in\mathbb{Q}^n $. For each ${\bf{x}} \in {\bf{B}}_m, \; {\bf{x}}=(x_1, x_2, \dots, x_n)$, we define  $\mcE_k ({\mathbf{x}})$ by :
\[
\begin{gathered}
  { {\mathcal{E}}_k}({\mathbf{x}}) = \left( {\xi _k^i({x_1}),\xi _k^i({x_2}) \ldots \xi _k^i({x_n})} \right) \; with \hfill \\
  \left| {{ {\mathcal{E}}_k}({\mathbf{x}})} \right| < {1},\;\;{\mathbf{x}} \in \prod\nolimits_{j = 1}^n {I_k^i} {\text{ and  }}{ {\mathcal{E}}_k}({\mathbf{x}}) = 0,\;{\text{ }}{\mathbf{x}} \notin \prod\nolimits_{j = 1}^n {I_k^i} . \hfill \\ 
\end{gathered} 
\]
It is easy to see that ${\mathcal{E}}_k ({\mathbf{x}})$ is in ${{{L}}^p [{\mathbb{R}}^n ]^n }={{\bf{L}}^p [{\mathbb{R}}^n ] }$ for $1 \le p \le\infty$.  Define $F_{k} (\; \cdot \;)$ on $
{{\bf{L}}^p [{\mathbb{R}}^n ] }$ by
 \beqa
F_{k} (f) = \int_{{\mathbb{R}}^n } {{\mathcal{E}}_{k} ({\mathbf{x}}) \cdot f({\mathbf{x}})d\la_n({\mathbf{x}})}. 
\eeqa

It is clear that $F_{k} (\; \cdot \;)$ is a bounded linear functional on ${{\bf{L}}^p [{\mathbb{R}}^n ]} $ for each ${k}$ with $ \left\| {F_{k} } \right\|   \le 1$.  Furthermore,  if $F_k (f) = 0$ for all ${k}$, $f = 0$ so that $\left\{ {F_{k} } \right\}$ is a fundamental sequence of functionals  on ${{\bf{L}}^p [{\mathbb{R}}^n ]} $ for $1 \le p \le \infty$.

Set ${t_{k}}= \tfrac{1}{{2^k }} $ so that ${\sum\nolimits_{k = 1}^\infty  {t_k}}=1$  and  define a  inner product $\left( {\; \cdot \;} \right) $ on ${{\bf{L}}^1 [{\mathbb{R}}^n ]} $ by
\beqa
 \left( {f,g} \right) = \sum\nolimits_{k = 1}^\infty  {t_k } \left[ {\int_{\mathbb{R}^n } {{\mathcal{E}}_k ({\mathbf{x}})\cdot f({\mathbf{x}})d\la_n({\mathbf{x}})} } \right] \overline{\left[ {\int_{\mathbb{R}^n } {{\mathcal{E}}_k ({\mathbf{y}}) \cdot g({\mathbf{y}})d\la_n({\mathbf{y}})} } \right]}.    
\eeqa
 The completion of ${{\bf{L}}^1 [{\mathbb{R}}^n ]} $ with the above inner product is a Hilbert space, which we denote as ${{SD}}^2 [{\mathbb{R}}^n ] $.   
\begin{thm} For each $p,\;1 \leqslant p \leqslant \infty$, we have:
\begin{enumerate}
\item The space ${SD}^2 [{\R}^n ] \supset {\bf{L}}^p [\R^n ]$ as a continuous, dense and \underline{compact}  embedding.
\item The space ${{SD}}^2 [{\R}^n ] \supset \mathfrak{M}[{\R}^n ]$, the space of finitely additive measures on $\R^n$, as a continuous dense and \underline{compact}  embedding.
\end{enumerate}
\end{thm}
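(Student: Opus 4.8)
The plan is to reduce everything to the explicit series form of the norm, $\|f\|_{SD}^2=(f,f)=\sum_{k=1}^{\iy}t_k\,|F_k(f)|^2$, in which $\|F_k\|\le 1$ and $\sum_k t_k=1$; equivalently $\Phi(f):=(\sqrt{t_k}\,F_k(f))_{k\ge 1}$ is an isometry of $SD^2[\R^n]$ into $\ell^2$, and all three assertions are read off from it. For $f\in{\bf L}^p[\R^n]$ every $F_k(f)$ is well defined because $\mcE_k\in{\bf L}^{p'}[\R^n]$ has compact support, and $\|\Phi(f)\|_{\ell^2}^2=\sum_k t_k|F_k(f)|^2\le\|f\|_p^2$ by $\|F_k\|\le1$ and $\sum_k t_k=1$, which is the asserted continuity (embedding norm $\le1$). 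To check that $\Phi(f)$ actually lands in $SD^2[\R^n]=\overline{\Phi({\bf L}^1[\R^n])}$, I would truncate by $f_N:=f\,\mathbf{1}_{B_N}\in{\bf L}^1[\R^n]$: since each $\mcE_k$ is compactly supported, $F_k(f_N)=F_k(f)$ once $B_N$ contains its support, so $\|\Phi(f)-\Phi(f_N)\|_{\ell^2}\to0$ by dominated convergence on the series. Injectivity is precisely the fundamentality of $\{F_k\}$ already recorded: $\|f\|_{SD}=0$ forces $F_k(f)=0$ for all $k$, hence $f=0$.

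For density I would use the compactly supported simple functions, which lie in ${\bf L}^p[\R^n]$ for every $p\in[1,\iy]$ simultaneously. They are $\|\cdot\|_1$-dense in ${\bf L}^1[\R^n]$, hence $SD$-dense there because $\|\cdot\|_{SD}\le\|\cdot\|_1$, hence $SD$-dense in $SD^2[\R^n]$ by construction of the completion. Consequently the image of ${\bf L}^p[\R^n]$ is dense in $SD^2[\R^n]$ for all $p$ at once, with no separate argument for $p=\iy$ required.

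The substantive step is compactness, where the decay $t_k=2^{-k}$ does the work. Let $\{f_m\}$ be bounded in ${\bf L}^p[\R^n]$, say $\|f_m\|_p\le M$; then $|F_k(f_m)|\le M$ for all $k,m$, so a diagonal argument over $k$ extracts a subsequence $\{f_{m_j}\}$ along which $F_k(f_{m_j})$ converges for each fixed $k$. Setting $\de_k:=F_k(f_{m_j})-F_k(f_{m_l})$ and using $|\de_k|\le 2M$,
\[
\|f_{m_j}-f_{m_l}\|_{SD}^2=\sum_{k\le K}t_k\,|\de_k|^2+\sum_{k>K}t_k\,|\de_k|^2\le\sum_{k\le K}t_k\,|\de_k|^2+4M^2\,2^{-K}.
\]
The tail is $<\e$ once $K$ is large, uniformly in $j,l$, and the finite head tends to $0$ as $j,l\to\iy$ by the pointwise-in-$k$ convergence; thus $\{f_{m_j}\}$ is $SD$-Cauchy and converges in the complete space $SD^2[\R^n]$. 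Equivalently, $\Phi$ carries the unit ball of ${\bf L}^p[\R^n]$ to a bounded subset of $\ell^2$ with equi-small tails, which is precompact. This proves (1).

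Part (2) runs along the same template with $F_k(\m)=\int_{\R^n}\mcE_k(\mathbf{x})\cdot d\m(\mathbf{x})$: since $|\mcE_k|<1$, each $F_k$ has norm $\le1$ on $\mfM[\R^n]$ for the total variation norm, so continuity is identical; density is immediate because the $SD$-dense subspace ${\bf L}^1[\R^n]$ sits inside $\mfM[\R^n]$; and the diagonal/tail argument is verbatim, using $|F_k(\m_m)|\le M$ for a total-variation-bounded sequence. I expect the one genuinely delicate point to be injectivity on this larger space---that $\{F_k\}$ still separates $\mfM[\R^n]$, so $F_k(\m)=0$ for all $k$ forces $\m=0$---which must be established directly from the separating properties of the $\mcE_k$ rather than inherited from the ${\bf L}^1$ case.
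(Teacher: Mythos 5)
Your treatment of part (1) is correct, and on the decisive point---compactness---it takes a genuinely different and more robust route than the paper. The paper argues that a weakly convergent sequence in ${\bf{L}}^p$ is mapped to a strongly convergent sequence in $SD^2$: coordinatewise convergence $F_k(f_n-f)\to 0$ is immediate since $\mcE_k\in{\bf{L}}^q$, and the unstated passage from coordinatewise to norm convergence needs exactly the uniform bound plus geometric tail estimate that you make explicit. More importantly, the paper's route identifies compactness with ``weak-to-strong'' sequential continuity, which yields compactness of the embedding only where bounded sequences admit weakly convergent subsequences, i.e., in the reflexive range $1<p<\infty$ (and at $p=\infty$ after switching to weak* convergence); at $p=1$ it gives nothing, since a bounded sequence in ${\bf{L}}^1$ (e.g., an approximate identity) need not have any weakly convergent subsequence. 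Your argument---diagonal extraction over $k$ from $|F_k(f_m)|\le M$, then the tail bound $\sum_{k>K}t_k|\delta_k|^2\le 4M^2 2^{-K}$---uses only norm boundedness, so it proves compactness uniformly for all $1\le p\le\infty$. You also patch two details the paper passes over: that an ${\bf{L}}^p$ function genuinely defines an element of the abstract completion of ${\bf{L}}^1$ (your truncation argument), and that ${\bf{L}}^p$ itself, not merely ${\bf{L}}^1$, is $SD$-dense, via compactly supported simple functions.

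In part (2), the point you flag as delicate is a genuine gap, and in fact it cannot be closed: the family $\{F_k\}$ does not separate $\mfM[\R^n]$. Every $\mcE_k$ has compact support, so any nonzero finitely additive measure vanishing on all bounded sets is annihilated by every $F_k$. Such measures exist: average over the annulus $B_{R+1}\setminus B_R$, i.e.\ set $\mu_R(E)=\lambda_n\lt(E\cap(B_{R+1}\setminus B_R)\rt)/\lambda_n\lt(B_{R+1}\setminus B_R\rt)$, and let $\mu$ be a Banach (or ultrafilter) limit of $\mu_R$ as $R\to\iy$; then $\mu$ is finitely additive with total mass $1$, yet $F_k(\mu)=0$ for all $k$. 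Hence the map $\mfM[\R^n]\to SD^2[\R^n]$ is not injective, and part (2), read as an embedding, fails on the purely finitely additive part. For comparison, the paper's entire proof of (2) is the one-line identification $\mfM[\R^n]={\bf{L}}^1[\R^n]^{**}\subset SD^2[\R^n]$, i.e., the double adjoint $j^{**}$ of the embedding $j:{\bf{L}}^1\to SD^2$; this does not establish injectivity either, since $j^{**}$ is injective only if the range of $j^*$ is norm dense in ${\bf{L}}^\infty$, whereas $j^*$ takes values in continuous functions vanishing at infinity, and the functional $\mu$ above lies in its kernel. So your continuity, density, and compactness arguments transfer correctly, and the obstruction you identified is a defect of the claim itself rather than of your proof.
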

\begin{proof}
Since ${SD}^2 [{\mathbb{R}}^n ]$ contains ${{L}}^1 [{\mathbb{R}}^n ]$ densely, to prove (1), we need only show that ${{L}}^q [{\mathbb{R}}^n ]  \subset {SD}^2 [{\mathbb{R}}^n ]$ for $q \ne 1$.  Let $f \in {{L}}^q [{\mathbb{R}}^n ]$ and $q < \infty $. By construction, for every $j$,  $\left| { {\mathcal{E}}_j({\mathbf{x}})} \right| < \tf{1}{\sqrt{n}}$ so that there is a constant $C=C(q)$, with ${\left| { {\mathcal{E}}_j({\mathbf{x}})} \right|^q} \leqslant  C{\left| { {\mathcal{E}}_j({\mathbf{x}})} \right|}$.  It follows that:  
\[
\begin{gathered}
 \left\| f \right\|_{{SD}^2}  = \left[ {\sum\nolimits_{k = 1}^\infty  {t_k } \left| {\int_{{\mathbb{R}}^n } {{\mathcal{E}}_k ({\mathbf{x}})f({\mathbf{x}})d\la_{n}({\mathbf{x}})} } \right|^{\frac{{2q}}{q}} } \right]^{1/2}  \hfill \\
{\text{       }} \leqslant C\left[ {\sum\nolimits_{k = 1}^\infty  {t_k } \left( {\int_{{\mathbb{R}}^n } {{\mathcal{E}}_k ({\mathbf{x}})\left| {f({\mathbf{x}})} \right|^q d\la_{n}({\mathbf{x}})} } \right)^{\frac{2}{q}} } \right]^{1/2}  \hfill \\
{\text{      }} \leqslant C\sup _k \left( {\int_{{\mathbb{R}}^n } {{\mathcal{E}}_k ({\mathbf{x}})\left| {f({\mathbf{x}})} \right|^q d\la_{n}({\mathbf{x}})} } \right)^{\frac{1}
{q}}  \leqslant C\left\| f \right\|_q . \hfill \\ 
\end{gathered} 
\]
Hence, $f \in{SD}^2 [{\mathbb{R}}^n ] $.  For $q = \infty $, first note that $ vol({\mathbf{B}}_k )^2 \le \left[ {\frac{1}
{{2\sqrt n }}} \right]^{2n}$, so we have 
\[
\begin{gathered}
  \left\| f \right\|_{{SD}^2}  = \left[ {\sum\nolimits_{k = 1}^\infty  {t_k } \left| {\int_{{\mathbb{R}}^n } {{\mathcal{E}}_k ({\mathbf{x}})f({\mathbf{x}})d\la_{n}({\mathbf{x}})} } \right|^2 } \right]^{1/2}  \hfill \\
  {\text{       }} \leqslant \left[ {\left[ {\sum\nolimits_{k = 1}^\infty  {t_k [vol({\mathbf{B}}_k )]^2 } } \right][ess\sup \left| f \right|]^2 } \right]^{1/2}  \leqslant {\left[ {\frac{1}
{{2\sqrt n }}} \right]^{n}}\left\| f \right\|_\infty  . \hfill \\ 
\end{gathered} 
\]
Thus $f \in{SD}^2 [{\mathbb{R}}^n ] $, and ${\bf{L}}^\infty  [{\mathbb{R}}^n ] \subset{SD}^2 [{\mathbb{R}}^n ]$.
To prove compactness, suppose $\{f_n \}$ is any weakly convergent sequence in ${\bf{L}}^p, \; 1 \le p \le \iy$ with limit $f$.  Since ${\mathcal{E}}_k \in {\bf{L}}^q, \; 1/p+ 1/q=1$,   
\[
\int_{\mathbb{R}^n } { {\mathcal{E}}_k ({\mathbf{x}}) \cdot \left[ {f_n ({\mathbf{x}}) - f({\mathbf{x}})} \right]d\la_{n}({\mathbf{x}})}  \to 0
\]
for each $k$.  It follows that $\{f_n \}$ converges strongly to $f$ in ${SD}^2$.

To prove (2), we note that $\mfM[\R^n]={\bf{L}}^1[\R^n]^{**} \subset{SD}^2[\R^n] $.
\end{proof}
\begin{Def}We call ${{SD}}^2 [{\mathbb{R}}^n ] $ the Jones strong distribution Hilbert space on $\R^n$.
\end{Def}
In order to justify our definition, let $\al$ be a multi-index of nonnegative integers, $\al = (\al_1, \ \al_2, \ \cdots \ \al_k)$, with $\left| \alpha  \right| = \sum\nolimits_{j = 1}^k {\alpha _j } $.   If $D$ denotes the standard partial differential operator, let 
\[
D^{\al}=D^{\al_1}D^{\al_2} \cdots D^{\al_k}.
\]
\begin{thm} Let $\mcD[\R^n]$  be $\C_c^\iy[\R^n]$ equipped with the standard locally convex topology (test functions).
\begin{enumerate}
\item If $\phi_n \to \phi$ in $\mcD[\R^n]$, then $\phi_n \to \phi$ in the norm topology of  $ SD^2[\R^n]$, so that $\mcD[\R^n] \subset {{SD}}^2[{\mathbb{R}}^n ]$ as a continuous dense embedding.
\item If $T \in \mcD'[\R^n]$, then $T \in {SD^2[\R^n]}'$, so that $\mcD'[\R^n] \subset {{SD}^2[{\mathbb{R}}^n ]}'$ as a continuous dense embedding.
\item For any $f, \ g   \in SD^2[\R^n]$ and any multi-index $\al, \ {\left( {{D^\alpha }f,g} \right)_{SD}}={(-i)^\alpha }{\left( {f,g}\right)_{SD}}$.
\item The functions  $\mcS[\R^n]$, of rapid decrease at infinity are contained in  $SD^2[\R^n]$ continuous embedding, so that $\mcS'[\R^n] \subset {{SD}^2[{\mathbb{R}}^n ]}'$.
\end{enumerate}
\end{thm}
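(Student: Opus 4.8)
The plan is to dispatch the four assertions separately, resting on two facts already available: the elementary domination $\|f\|_{SD}\le\|f\|_{1}$, which follows from $|\mcE_k|<1$ and $\sum_k t_k=1$ since $\|f\|_{SD}^2=\sum_k t_k|F_k(f)|^2\le(\sum_k t_k)\|f\|_1^2$, and the containments $\mathbf{L}^p,\mfM\subset SD^2[\R^n]$ furnished by the preceding embedding theorem. For (1), suppose $\phi_n\to\phi$ in $\mcD[\R^n]$. Then the $\phi_n$ share a compact support $K$ and $\phi_n\to\phi$ uniformly, so $\|\phi_n-\phi\|_1\le\la_n(K)\,\|\phi_n-\phi\|_\infty\to0$ and hence $\|\phi_n-\phi\|_{SD}\le\|\phi_n-\phi\|_1\to0$, giving continuity of the embedding. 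Density is then by transitivity: $\mcD[\R^n]$ is $\|\cdot\|_1$-dense in $\mathbf{L}^1[\R^n]$, which is $\|\cdot\|_{SD}$-dense in $SD^2[\R^n]$ by construction, and since $\|\cdot\|_{SD}\le\|\cdot\|_1$ the chain survives in the coarser norm.

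I regard (3) as the heart of the theorem, and I would prove it by computing $F_k$ of a derivative. On its cube each component of $\mcE_k$ is a constant multiple of $e^{i(x_j-x^i)}$, so (reading the derivative distributionally and moving it onto the kernel) $\partial_{x_j}\xi_k^i=i\,\xi_k^i$ and therefore $D^{\al}\mcE_k=i^{|\al|}\mcE_k$. Transposing the derivative, $F_k(D^{\al}f)=(-1)^{|\al|}\langle D^{\al}\mcE_k,f\rangle=(-i)^{|\al|}F_k(f)$, and summing against $\overline{F_k(g)}$ with the weights $t_k$ gives $(D^{\al}f,g)_{SD}=(-i)^{|\al|}(f,g)_{SD}$, first on a dense class and then on all of $SD^2[\R^n]$ by continuity. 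In particular $D^{\al}$ is an isometry up to the unimodular factor $(-i)^{|\al|}$, which is exactly the remarkable norm invariance $\|D^{\al}\mathbf{u}\|_{SD}=\|\mathbf{u}\|_{SD}$.

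For (2) and (4) the strategy is duality together with (3). By (1) the embedding $\mcD[\R^n]\hookrightarrow SD^2[\R^n]$ is continuous and dense, so each $u\in SD^2[\R^n]$, identified via Riesz with $f\mapsto(f,u)_{SD}$ and restricted to $\mcD[\R^n]$, is a distribution; this realizes $SD^2\cong(SD^2)'$ inside $\mcD'[\R^n]$. To place a prescribed $T$ into $(SD^2[\R^n])'$ I would invoke the structure theorem, writing $T=\sum_{|\al|\le m}D^{\al}g_{\al}$ with continuous $g_{\al}$: each $g_{\al}$ lies in $SD^2[\R^n]$ by the preceding theorem (compactly supported continuous functions lie in $\mathbf{L}^1$, and polynomially bounded ones still lie in $SD^2$ because the defining cubes have geometrically shrinking volume), and by (3) the operators $D^{\al}$ keep them there, so $T\in SD^2\cong(SD^2)'$. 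This yields $\mcD'[\R^n],\mcS'[\R^n]\subset(SD^2[\R^n])'$. The first half of (4), that $\mcS[\R^n]\subset SD^2[\R^n]$ continuously, I would prove by estimating $|F_k(f)|\le\la_n(\mathbf{B}_k)\sup_{\mathbf{B}_k}|f|$ and summing the geometrically small volumes against the rapid decay of a Schwartz function, controlled by a single Schwartz seminorm.

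The hard part will be the derivative identity in (3): every other step reduces to the domination $\|\cdot\|_{SD}\le\|\cdot\|_1$, to the preceding theorem, or to routine duality, but the relation $\partial_{x_j}\xi_k^i=i\,\xi_k^i$ is only literally valid in the interior of the cube, and the jump of $\mcE_k$ across the boundary of $\prod_j I_k^i$ threatens to produce boundary (delta) contributions when the derivative is transposed. The purpose of building $\mcE_k$ by convolving the mollifier $f_k^i$ with the Jones function $h_k$ — whose own eigen-relation $h_k'=-ih_k$ was obtained by a boundary-free integration by parts in $y$ using $iy\,\partial_y g=\partial_x g$ — is precisely to cancel these boundary terms, and making that mechanism rigorous is the crux on which the embeddings of $\mcD'[\R^n]$ and $\mcS'[\R^n]$ ultimately depend.
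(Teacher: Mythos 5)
Your part (1) and the first half of (4) are sound and essentially match the paper: for (1) the paper likewise reduces to uniform convergence on the common compact support plus density of $\C_c^\iy[\R^n]$ in $\mathbf{L}^1[\R^n]$ (your route through $\|\cdot\|_{SD}\le\|\cdot\|_1$ is the same estimate), and for $\mcS[\R^n]\subset SD^2[\R^n]$ the paper offers no argument at all, so your seminorm estimate supplies one. Your (3) is exactly the paper's computation: transpose $D^\al$ onto $\mcE_k$ and invoke $D^\al\mcE_k=i^{|\al|}\mcE_k$, which the paper dismisses as ``an easy calculation.'' You genuinely diverge only on (2) and the dual half of (4): the paper extends a given $T\in\mcD'[\R^n]$ to $SD^2[\R^n]$ by Hahn--Banach and then applies the Riesz representation, whereas you represent $T$ by the structure theorem as $\sum_{|\al|\le m}D^\al g_\al$ and push the pieces into $SD^2$ via (3).

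The difficulty you flag in (3) is a genuine gap, and it cannot be repaired. No nonzero function in $\C_c^\iy[\R]$ satisfies $\xi'=i\xi$ (every solution is $ce^{ix}$, never compactly supported), so there is a strict dichotomy: either $\xi_k^i$ is the literal truncated exponential of the paper's display, in which case it is discontinuous and transposing $D^\al$ produces face (delta) terms; or $\xi_k^i$ is the smooth convolution $\al_{k,i}^{-1}(f_k^i*h_k)$, in which case the eigen-relation holds only on $I_k^i$ and fails on the shell between $I_k^i$ and the boundary of $\mathrm{spt}(\chi_k^i)$, whose contribution does not cancel for general $f$. Either way $F_k(D^\al f)\ne(-i)^{|\al|}F_k(f)$ for suitable $f\in\mcD[\R^n]$, so the identity in (3) --- and the advertised invariance $\|D^\al u\|_{SD}=\|u\|_{SD}$ --- does not survive; the paper's ``easy calculation'' silently assumes the false global eigen-relation, so on this point you and the paper stand or fall together, but your proposal at least localizes the hole.

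Your route to (2) and (4) has further gaps of its own. First, the global finite representation $T=\sum_{|\al|\le m}D^\al g_\al$ with continuous $g_\al$ exists for compactly supported and for tempered distributions, but not for general $T\in\mcD'[\R^n]$, which is only \emph{locally} of finite order; $T=\sum_{j\ge1}\delta^{(j)}(\cdot-j)$ on $\R$ admits no such representation, so your argument can at best reach $\mcS'$, not $\mcD'$. Second, your auxiliary claim that polynomially bounded continuous functions lie in $SD^2$ is not contained in the preceding theorem (which covers $\mathbf{L}^q$ and $\mfM$) and depends on how fast $|{\mathbf{x}}^i|$ grows in the chosen enumeration of $\bQ^n$; it needs proof, not citation. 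Third, and most importantly, producing an element $g_T=\sum_\al(-i)^{|\al|}g_\al\in SD^2$ and invoking Riesz does not prove the assertion: the functional $f\mapsto(f,g_T)_{SD}$ does not restrict on $\mcD[\R^n]$ to the pairing $\langle T,\phi\rangle$, because $(\cdot,\cdot)_{SD}$ is not an extension of the $\mcD'$--$\mcD$ duality. Indeed, under the natural reading the claim is refutable: for $\phi_\e=\psi(\cdot/\e)$ with $\psi(0)=1$ one has $\|\phi_\e\|_{SD}\le\|\phi_\e\|_1=\e^n\|\psi\|_1\to0$ while $\langle\delta,\phi_\e\rangle=1$, so even $\delta\in\mcS'$ has no $SD$-continuous extension from $\mcD[\R^n]$. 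The paper's own Hahn--Banach step founders on the same rock, since Hahn--Banach can only extend functionals that are already bounded in the $SD^2$ norm on $\mcD[\R^n]$, which a general distribution is not.
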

\begin{proof} To prove (1), suppose that $\phi_n \to \phi$ in $\mcD[\R^n]$.  By definition, there exist a compact set $K \subset \R^n$, which is the support of $\ph_n -\phi$ and ${D^\alpha }{\phi_n}$ converges to ${D^\alpha }\phi $ uniformly on $K$ for every multi-index $\al$.  Let $\{ \mcE_{K_j} \}$ be the set of all $\mcE_j$, with support $K_j \subset K$.  If $\al$ is a multi-index, we have:
\[
\begin{gathered}
  \mathop {\lim }\limits_{k \to \infty } {\left\| {{D^\alpha }{\phi _k} - {D^\alpha }\phi } \right\|_{SD}} \hfill \\
= \mathop {\lim }\limits_{k \to \infty } {\left\{ {\sum\limits_{j = 1}^\infty  {{t_{{K_j}}}{{\left| {\int_{{\mathbb{R}^n}} {{{\mathcal{E}}_{K_j}}({\mathbf{x}}) \cdot \left[ {{D^\alpha }{\phi _k}({\mathbf{x}}) - {D^\alpha }\phi ({\mathbf{x}})} \right]d{\lambda _n}({\mathbf{x}})} } \right|}^2}} } \right\}^{1/2}} \hfill \\
   \leqslant M \mathop {\lim \sup }\limits_{k \to \infty, {\bf{x}} \in K } \left| {{D^\alpha }{\phi _k}({\mathbf{x}}) - {D^\alpha }\phi ({\mathbf{x}})} \right| = 0. \hfill \\ 
\end{gathered} 
\]
Thus, since $ \al$ is arbitrary, we see that $\mcD[\R^n] \subset SD^2[\R^n]$ as a continuous embedding.  Since 
$\C_c^\iy[\R^n]$ is dense in ${\bf{L}}^1[\R^n]$, $\mcD[\R^n]$ is dense in $SD^2[\R^n]$.
To prove (2) we note that, as $\mcD[\R^n]$ is a dense locally convex subspace of $SD^2[\R^n]$, by a corollary of the Hahn-Banach Theorem every continuous linear functional, $T$ defined on $\mcD[\R^n]$, can be extended to a continuous linear functional on $SD^2[\R^n]$.  By the Riesz representation Theorem, every continuous linear functional $T$, defined on $SD^2[\R^n]$ is of the form $T(f) = {\left( {f,g} \right)_{SD}}$, for some $g \in SD^2[\R^n]$.  Thus, $T \in {SD^2[\R^n]}'$ and, by the identification $T \leftrightarrow g$ for each $T$ in $\mcD'[\R^n]$, we can map  $\mcD'[\R^n]$ into $SD^2[\R^n]$ as a continuous dense embedding. 

To prove (3), recall that each ${\mathcal{E}}_k \in \C_c^{\iy}[{\mathbb{R}^n }]$ so that, for any $f \in SD^2[\R^n]$, 
\[ 
{\int_{\mathbb{R}^n } {{\mathcal{E}}_k ({\mathbf{x}}) \cdot D^{\al}{{f}}({\mathbf{x}})d\la_n({\mathbf{x}})} }= (-1)^{\lt|\al \rt|}{\int_{\mathbb{R}^n } D^{\al} {{\mathcal{E}}_k ({\mathbf{x}}) \cdot{{f}{}}({\mathbf{x}})d\la_n({\mathbf{x}})} }.
\]
An easy calculation shows that: 
\[ 
(-1)^{\lt|\al \rt|}{\int_{\mathbb{R}^n } {D^{\al}{\mathcal{E}}_k ({\mathbf{x}}) \cdot {{f}}({\mathbf{x}})d\la_n({\mathbf{x}})} }= (-i)^{\lt|\al \rt|}{\int_{\mathbb{R}^n } {{\mathcal{E}}_k ({\mathbf{x}}) \cdot{{f}{}}({\mathbf{x}})d\la_n({\mathbf{x}})} }.
\]
It now follows that, for any ${\bf g}  \in {{SD}}^2[{\mathbb{R}}^n ], \; (D^{\al}{{f}}, {\bf g})_{{{SD}}^2}= (-i)^{\lt|\al \rt|}({{f}{}},{\bf g})_{{{SD}}^2}$.
\end{proof} 
\subsection{Functions of Bounded Variation}
The integral due to Henstock \cite{HS} and Kurzweil \cite{KW} (HK-integral), is the easiest to learn and best known of those integrals that integrate nonabsolutely integrable functions, which also extend the Lebesgue integral.  A good discussion of all the standard types of integrals, with comparisons among themselves and the Lebesgue integral, can be found in Gordon \cite{GO}. 

The objective of this section is to show that every  HK-integrable function is in  ${{SD}}^2 [{\mathbb{R}}^n ] $.  To do this, we need to discuss a certain class of  functions of bounded variation.  For functions defined on $\R$, the definition of bounded variation is unique.  However, for functions on $\R^n, \; n \ge 2$, there are a number of distinct definitions.

The functions of  bounded variation in the sense of Cesari are well known to analysts working in partial differential equations  and geometric measure theory (see Leoni \cite{GL}).  
\begin{Def}A function $f \in L^1[\R^n]$ is said to be of bounded variation in the sense of Cesari or $f \in BV_c[\R^n]$, if $f \in L^1[\R^n]$ and each $i, \; 1 \le i \le n$, there exists a signed Radon measure $\mu_i$, such that  
\[
\int_{\mathbb{R}^n } {f({\mathbf{x}})\frac{{\partial \phi ({\mathbf{x}})}}
{{\partial x_i }}d\lambda _n ({\mathbf{x}})}  =  - \int_{\mathbb{R}^n } {\phi ({\mathbf{x}})d\mu _i ({\mathbf{x}})} ,
\]
for all $\phi \in \C_0^\iy[\R^n]$.
\end{Def} 

The functions of  bounded variation in the sense of Vitali \cite{TY1}, are well known to applied mathematicians and engineers with interest in error estimates associated with research in control theory, financial derivatives, high speed networks, robotics and in the calculation of certain integrals.   (See, for example \cite{KAA}, \cite{{NI}}, \cite{PT} or \cite{PTR} and references therein.)
For the general definition, see Yeong (\cite{TY1}, p. 175).  We present a definition that is sufficient for continuously  differentiable functions.
\begin{Def} A function $f$ with continuous partials is said to be of bounded variation in the sense of Vitali or $f \in BV_v[\R^n]$ if for all intervals $(a_i,b_i), \, 1 \le i \le n$,
\[
V(f)=\int_{a_1 }^{b_1 } { \cdots \int_{a_n }^{b_n } {\left| {\frac{{\partial ^n f({\mathbf{x}})}}
{{\partial x_1 \partial x_2  \cdots \partial x_n }}} \right|d\lambda _n ({\mathbf{x}})} }  < \infty. 
\]
\end{Def}
\begin{Def}We define $BV_{v,0}[\R^n]$ by: 
\[
BV_{v,0}[\R^n]= \{f({\bf x}) \in BV_v[\R^n]: f({\bf x}) \to 0, \; {\rm as} \; x_i \to -\iy \},
\]
where $x_i$ is any component of ${\bf x}$. 
\end{Def}
The following two theorems may be found in \cite{TY1}. (See p. 184 and 187, where the first is used to prove the second.)  If $[a_i, b_i] \subset \R$, we define $[{\bf a}, {\bf b}] \in \R^n$ by   $[{\bf a}, {\bf b}]=\prod_{k=1}^n{[a_i,b_i]}$.  (The notation $(RS)$ means Riemann-Stieltjes.)
\begin{thm}Let $f$ be HK-integrable on $\left[ {{\mathbf{a}},{\mathbf{b}}} \right]$ and let $g \in BV_{v,0}[\R^n]$, then $fg$ is 
HK-integrable and 
\[
(HK)\int_{[{\bf a}, {\bf b}] }{f({\bf x})g({\bf x})d\la_n({\bf x})}=(RS)\int_{[{\bf a}, {\bf b}] }{\lt\{(HK)\int_{[{\bf a}, {\bf x}] }f({\bf y})d\la_n({\bf y})\rt\}dg({\bf x})}
\] 
\end{thm}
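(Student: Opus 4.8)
The plan is to read this as a multidimensional integration-by-parts formula for the Henstock--Kurzweil integral, in which the indefinite integral $F({\bf x}) = (HK)\int_{[{\bf a},{\bf x}]} f({\bf y})\,d\la_n({\bf y})$ is the integrand tested against the Stieltjes integrator $dg$. I would establish it in two stages: first the one-dimensional case, and then lift to $\R^n$ by induction on the dimension through iterated integration. The hypothesis $g \in BV_{v,0}[\R^n]$ is what powers both stages: bounded Vitali variation supplies a well-defined $n$-fold Stieltjes increment for $g$, while the vanishing of $g$ as each $x_i \to -\iy$ is used to annihilate the lower-corner boundary contributions that would otherwise survive.

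For the base case $n=1$, the engine is Abel summation (summation by parts) carried out at the level of tagged gauge-fine partitions. Given $\e>0$, I would fix a gauge $\de$ for which every $\de$-fine Riemann sum of $f$ approximates $(HK)\int f$ to within $\e$, and use that the HK-primitive $F$ is continuous and $ACG_*$ (see Gordon \cite{GO}). For a $\de$-fine tagged partition, rewriting $\sum_i f(t_i)\,\Delta x_i\, g(t_i)$ by Abel summation converts it into a Riemann--Stieltjes sum $\sum_i F(\cdot)\,\Delta g$; the discrepancy between the two sums is bounded by $\mathrm{osc}(F)$ times the total variation of $g$, which is finite since $g$ has bounded variation. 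Letting the mesh shrink then shows simultaneously that $fg$ is HK-integrable and that the claimed identity holds in one variable.

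For the inductive step, write $[{\bf a},{\bf b}] = [a_1,b_1] \times [{\bf a}',{\bf b}']$, apply the one-dimensional result in the $x_1$ variable with the remaining coordinates frozen as parameters, and then invoke the inductive hypothesis in the remaining $n-1$ variables. Here the bounded Vitali variation of $g$ is exactly the condition guaranteeing that the mixed increment $\Delta g$ factors correctly through the iteration, so that the iterated Riemann--Stieltjes integral is well defined and coincides with the full $n$-dimensional one; and the $BV_{v,0}$ hypothesis kills the boundary terms produced on the lower faces $x_i = a_i$ (in the limit $a_i \to -\iy$), leaving the formula free of extra boundary contributions. This is the route by which the one-dimensional theorem on p.~184 of \cite{TY1} yields the $n$-dimensional statement on p.~187.

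The main obstacle is the interchange hidden in the inductive step: the Henstock--Kurzweil integral admits no unconditional Fubini theorem, so the passage from iterated to full integration must be justified for this specific product $fg$. I expect to handle this by leaning on the structure theory of $BV_v$ functions --- the signed measure associated to $g$ through its $n$-fold mixed differences --- together with the continuity and $ACG_*$ character of the primitive $F$, which jointly render the partial integrals ${\bf x}' \mapsto (HK)\int f(\cdot,{\bf x}')$ measurable and the Stieltjes iteration legitimate. The delicate part is the bookkeeping of the $2^n$ corner terms and the verification that all but the surviving one vanish under the $BV_{v,0}$ hypothesis.
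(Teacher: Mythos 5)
You should know at the outset that the paper contains no proof of this statement: it is imported, together with the estimate that follows it, directly from Lee Tuo-Yeong's book \cite{TY1} (the paper says only ``See p.~184 and 187, where the first is used to prove the second''). So there is no argument in the paper to compare yours against, and your sketch must stand on its own. It does not, and the obstruction is exactly the boundary bookkeeping that your plan defers. In one dimension, Henstock's lemma plus Abel summation --- the engine you propose --- yields
\[
(HK)\int_a^b f g \, d\lambda \;=\; F(b)g(b) \;-\; (RS)\int_a^b F \, dg, \qquad F(x) = (HK)\int_a^x f \, d\lambda ,
\]
so the lower-corner contribution dies automatically because $F(a)=0$ (no hypothesis on $g$ is involved), while the upper-corner term $F(b)g(b)$ survives; the $BV_{v,0}$ condition constrains $g$ only as $x_i \to -\infty$ and is powerless at the corner $\mathbf{b}$ of a compact box. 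Your sketch asserts the opposite (that $BV_{v,0}$ annihilates lower-corner terms and nothing else survives), and it also produces the wrong sign on the Stieltjes term. In fact the identity exactly as displayed is false: take $n=1$, $f \equiv 1$ on $[0,1]$, and $g$ smooth and nondecreasing with $g \equiv 0$ on $(-\infty,-2]$ and $g \equiv 1$ on $[-1,\infty)$. Then $g \in BV_{v,0}[\mathbb{R}]$, the left-hand side equals $1$, but $g$ is constant on $[0,1]$, so every Riemann--Stieltjes sum on the right vanishes and the right-hand side is $0$. No proof of the displayed identity can therefore succeed; the version actually proved in \cite{TY1} must carry corner or face terms, or anchor the primitive at the opposite corner (on a box whose lower corner recedes to $-\infty$, where $g \to 0$ genuinely kills the boundary term), and a correct write-up has to recover that precise formulation before any Abel-summation argument can be run.

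The second gap is in your inductive step. Applying the one-dimensional multiplier theorem in $x_1$ with $\mathbf{x}'$ frozen requires the section $x_1 \mapsto g(x_1,\mathbf{x}')$ to be of bounded variation, and finite Vitali variation alone does not give this: Vitali variation sees only mixed $n$-fold differences, so for instance $g(x,y)=\varphi(x)$ has zero Vitali variation no matter how wild $\varphi$ is. What rescues the induction is the other half of the hypothesis: finite Vitali variation together with vanishing at $-\infty$ lets one write $g(\mathbf{x}) = \mu\left((-\infty,\mathbf{x}]\right)$ for a finite signed Borel measure $\mu$, and from this representation the bounded variation of every section --- indeed the Hardy--Krause-type control that is the known sharp multiplier condition for HK-integrals in $\mathbb{R}^n$ --- follows. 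You attribute the needed structure to the Vitali variation ``factoring correctly,'' which as a statement about $BV_v$ is false; the measure representation is the missing lemma, and both your section-wise argument and your Fubini-type interchange would have to route through it.
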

\begin{thm}Let $f$ be HK-integrable on $\left[ {{\mathbf{a}},{\mathbf{b}}} \right]$ and let $g \in BV_{v,0}[\R^n]$, then $fg$ is 
HK-integrable and 
\[
\lt|(HK)\int_{[{\bf a}, {\bf b}] }{f({\bf x})g({\bf x})d\la_n({\bf x})}\rt|
\le \lt\|f\rt\|_DV_{[{\bf a}, {\bf b}] }(g)
\]
\end{thm}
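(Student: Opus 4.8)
The plan is to reduce everything to the preceding theorem and then to bound the resulting Riemann--Stieltjes integral by a single, essentially one-line, inequality. By the preceding theorem $fg$ is already HK-integrable on $[\mathbf{a},\mathbf{b}]$, so the only genuinely new content here is the displayed bound; in particular I need not re-prove integrability. Writing $F(\mathbf{x}) = (HK)\int_{[\mathbf{a},\mathbf{x}]} f(\mathbf{y}) \, d\la_n(\mathbf{y})$ for the indefinite HK-integral, the preceding theorem supplies the representation
\[
(HK)\int_{[\mathbf{a},\mathbf{b}]} f(\mathbf{x}) g(\mathbf{x}) \, d\la_n(\mathbf{x}) = (RS)\int_{[\mathbf{a},\mathbf{b}]} F(\mathbf{x}) \, dg(\mathbf{x}),
\]
and I would recall that, by the definition of the $D$-norm, $\sup_{\mathbf{x} \in [\mathbf{a},\mathbf{b}]} |F(\mathbf{x})| \le \|f\|_D$. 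Thus it suffices to bound the Riemann--Stieltjes integral on the right by $\|f\|_D \, V_{[\mathbf{a},\mathbf{b}]}(g)$.

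First I would write the Riemann--Stieltjes integral as the limit of its defining sums. For a partition $\mathcal{P}$ of $[\mathbf{a},\mathbf{b}]$ into subintervals $\{J_i\}$ with tags $\mathbf{t}_i \in J_i$, the corresponding sum is $S(\mathcal{P}) = \sum_i F(\mathbf{t}_i) \, \Delta g(J_i)$, where $\Delta g(J_i)$ denotes the $n$-dimensional increment of $g$ over $J_i$, that is, the alternating sum of the values of $g$ at the $2^n$ vertices of $J_i$. Taking absolute values and factoring out the supremum of $|F|$ gives
\[
\lt| S(\mathcal{P}) \rt| \le \sum_i |F(\mathbf{t}_i)| \, |\Delta g(J_i)| \le \lt( \sup_{\mathbf{x}} |F(\mathbf{x})| \rt) \sum_i |\Delta g(J_i)|.
\]
The second factor is a Vitali-variation sum for $g$ associated to $\mathcal{P}$, hence is at most $V_{[\mathbf{a},\mathbf{b}]}(g)$ by the very definition of the Vitali variation as the supremum of such sums over all partitions. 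Therefore $|S(\mathcal{P})| \le \|f\|_D \, V_{[\mathbf{a},\mathbf{b}]}(g)$ uniformly in $\mathcal{P}$, and letting the mesh of $\mathcal{P}$ tend to zero (the limit exists because the Riemann--Stieltjes integral exists, again by the preceding theorem) yields the claimed inequality.

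The step I expect to require the most care is the bookkeeping for the multidimensional increment $\Delta g(J_i)$: in dimension $n \ge 2$ the differential ``$dg$'' is not a simple difference of two endpoint values but the alternating vertex sum over each cell, and one must verify that $\sum_i |\Delta g(J_i)|$ is precisely the quantity whose supremum defines $V_{[\mathbf{a},\mathbf{b}]}(g)$. Once the two definitions are aligned this identification is immediate, and the remainder is the familiar estimate $\lt| \sum_i c_i m_i \rt| \le (\sup_i |c_i|) \sum_i |m_i|$. The hypothesis $g \in BV_{v,0}[\R^n]$ guarantees both that $V_{[\mathbf{a},\mathbf{b}]}(g) < \infty$ and, through the preceding theorem, that the representation and the existence of the Riemann--Stieltjes integral are available, so no regularity beyond what is already assumed is needed.
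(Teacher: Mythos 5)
Your proposal is correct and is essentially the argument the paper points to: the paper gives no proof of its own, citing Yeong \cite{TY1} with the remark that the preceding representation theorem (the $RS$-integral identity for $F(\mathbf{x})=(HK)\int_{[\mathbf{a},\mathbf{x}]}f\,d\la_n$) is used to prove this bound, and your proof is exactly that derivation. Your handling of the details — bounding the Riemann--Stieltjes sums by $\sup|F|\cdot\sum_i|\Delta g(J_i)|$, identifying the second factor with a Vitali-variation sum, and noting $\sup|F|\le\|f\|_D$ — is sound and fills in precisely what the citation leaves implicit.
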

\begin{lem}The space $HK[{\mathbb{R}}^n ]$, of all HK-integrable functions is contained in ${{SD}}^2 [{\mathbb{R}}^n ] $.
\end{lem}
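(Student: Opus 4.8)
The plan is to estimate the $SD^2$-norm of an arbitrary HK-integrable function directly from its series representation, $\left\| f \right\|_{SD^2}^2 = \sum_{k=1}^\infty t_k \left| F_k(f) \right|^2$, and to control each coefficient $F_k(f) = \int_{\mathbb{R}^n} \mathcal{E}_k(\mathbf{x}) \cdot f(\mathbf{x}) d\lambda_n(\mathbf{x})$ by means of Theorem 1.9. The essential point is that each $\mathcal{E}_k$ has compact support, contained in the cube $\mathbf{B}_k$, and that, being assembled from the smooth compactly supported functions $\xi_k^i$ together with the cube cutoff, each of its coordinate functions $(\mathcal{E}_k)_j$ is of bounded variation in the sense of Vitali and vanishes in each variable as $x_m \to -\infty$; hence $(\mathcal{E}_k)_j \in BV_{v,0}[\mathbb{R}^n]$.

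First I would fix $f \in HK[\mathbb{R}^n]$ and $k$, and write $F_k(f) = \sum_{j=1}^n \int_{\mathbf{B}_k} (\mathcal{E}_k)_j(\mathbf{x}) f_j(\mathbf{x}) d\lambda_n(\mathbf{x})$ as the finite sum of its coordinate pieces. Because $f$ is HK-integrable on $\mathbf{B}_k$ and each $(\mathcal{E}_k)_j \in BV_{v,0}[\mathbb{R}^n]$, the product $(\mathcal{E}_k)_j f_j$ is HK-integrable by Theorem 1.8, and Theorem 1.9 applies to each term, yielding $\left| F_k(f) \right| \le \sum_{j=1}^n \left\| f_j \right\|_D \, V_{\mathbf{B}_k}\!\bigl((\mathcal{E}_k)_j\bigr) \le \left\| f \right\|_D \, V(\mathcal{E}_k)$, where $V(\mathcal{E}_k)$ denotes the total Vitali variation of $\mathcal{E}_k$ over its supporting cube and $\left\| f \right\|_D$ is the Alexiewicz (Denjoy) norm, finite since $f \in HK[\mathbb{R}^n]$. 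Here one uses that the variation over the sub-cube is dominated by the global Denjoy norm.

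The second step is to show that $V(\mathcal{E}_k)$ stays bounded as $k \to \infty$ (or at worst grows slowly enough that the weighted series still converges). For this I would use $\left| \xi_k^i \right| < \tfrac{1}{n}$ together with the explicit form $\xi_k^i(x) = \tfrac{1}{n} e^{i(x - x^i)}$ on the shrinking intervals $I_k^i$, whose lengths $\left| I_k^i \right| = \tfrac{\pi}{2 a_k}$ tend to $0$: the one-dimensional total variation of each factor is bounded by a constant independent of $k$, and since the Vitali variation of a tensor-product function factors as the product of the one-dimensional variations of its factors, $V(\mathcal{E}_k) \le M$ for an absolute constant $M = M(n)$. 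Combining the two steps gives $\left\| f \right\|_{SD^2}^2 = \sum_{k=1}^\infty t_k \left| F_k(f) \right|^2 \le M^2 \left\| f \right\|_D^2 \sum_{k=1}^\infty t_k = M^2 \left\| f \right\|_D^2 < \infty$, because $\sum_k t_k = 1$; hence $f \in SD^2[\mathbb{R}^n]$.

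The main obstacle I anticipate is the bounded-variation bookkeeping in the second step: one must check carefully that the coordinate functions of $\mathcal{E}_k$, once the cube cutoff is imposed, genuinely lie in $BV_{v,0}[\mathbb{R}^n]$ in the sense of Yeong (p. 175) — that is, carry an honestly $n$-dimensional variation rather than the degenerate zero variation of a function of a single coordinate — and that the resulting variations are summable against the weights $t_k = 2^{-k}$. Both the decay of the interval lengths and the geometric weights work in our favor, so the estimate should close once the variation of each $\mathcal{E}_k$ is pinned down precisely.
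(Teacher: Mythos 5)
Your proposal follows essentially the same route as the paper's own proof: both observe that $\mathcal{E}_k \in BV_{v,0}[\mathbb{R}^n]$, invoke Theorem 1.9 to bound each coefficient by $\left|F_k(f)\right| \le \left\|f\right\|_{HK} V(\mathcal{E}_k)$, and then sum against the weights $t_k$ to conclude $\left\|f\right\|_{SD^2} < \infty$. In fact you are somewhat more careful than the paper, which simply asserts $\sup_k V(\mathcal{E}_k) < \infty$ without the coordinate-by-coordinate bookkeeping and the uniform bound on the Vitali variations that you supply.
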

\begin{proof}
Since each $\mcE_k({\bf x})$ is  continuous and differentiable, $\mcE_k({\bf x}) \in BV_{v,0}[\R^n]$, so that for  $f \in HK[{\mathbb{R}}^n ] $,
\[
\begin{gathered}
\left\| f \right\|_{{\bf{SD}}^2}^2  = \sum\nolimits_{k = 1}^\infty  {t_k } \left| {\int_{\mathbb{R}^n } {{\mathcal{E}}_k ({\mathbf{x}})\cdot f({\mathbf{x}})d{\mathbf{x}}} } \right|^2  \leqslant \sup _k \left| {\int_{\mathbb{R}^n } {{\mathcal{E}}_k ({\mathbf{x}})\cdot f({\mathbf{x}})d{\mathbf{x}}} } \right|^2  \hfill \\
\leqslant \left\| f \right\|_{HK}^2 [\sup _k V(\mcE_k)]^2 <\iy. \hfill \\
\end{gathered} 
\]
It follows that $f \in {{SD}}^2 [{\mathbb{R}}^n ]$. 
\end{proof}  
\subsection{{{The General Case,} ${{SD}}^p,\; 1 \le p \le \iy$ }} 
To construct ${SD}^p [{\mathbb{R}}^n ]$ for all $p$ and  for ${\bf f} \in {\bf{L}}^p$, define:
\[
\left\| {\bf f} \right\|_{{{SD}}^p }  = \left\{ {\begin{array}{*{20}c}
   {\left\{ {\sum\nolimits_{k = 1}^\infty  {t_k \left| {\int_{\mathbb{R}^n } { {\mathcal{E}}_k ({\mathbf{x}}) \cdot {\bf f}({\mathbf{x}})d\la_n({\mathbf{x}})} } \right|} ^p } \right\}^{1/p} ,1 \leqslant p < \infty},  \\
   {\sup _{k \geqslant 1} \left| {\int_{\mathbb{R}^n } { {\mathcal{E}}_k ({\mathbf{x}}) \cdot {\bf f}({\mathbf{x}})d\la_n({\mathbf{x}})} } \right|,p = \infty .}  \\

 \end{array} } \right.
\] 
It is easy to see that $\left\| \cdot \right\|_{{{SD}}^p }$ defines a norm on  ${\bf{L}}^p$.  If ${{{SD}}^p }$ is the completion of ${\bf{L}}^p$ with respect to this norm, we have:
\begin{thm} For each $q,\;1 \leqslant q \leqslant \infty,$
 ${SD}^p [{\mathbb{R}}^n ] \supset {\bf{L}}^q [{\mathbb{R}}^n ]$ as  dense continuous embeddings.
\end{thm}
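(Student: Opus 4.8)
My plan is to follow the scheme of the proof of Theorem 1.1 (the case $p=2$), with the $\ell^2$-sum replaced by the $\ell^p$-sum, and to factor everything through a single uniform estimate on the functionals $F_k(\mathbf f)=\int_{\R^n}\mcE_k(\mathbf x)\cdot \mathbf f(\mathbf x)\,d\la_n(\mathbf x)$. I will use three facts from the construction: $\sum_k t_k=1$; each $\mcE_k$ is supported in a cube $\mathbf B_k$ of finite volume $v_k$ with $v:=\sup_k v_k<\iy$; and $|\mcE_k(\mathbf x)|\le 1$. Recall also that $\{F_k\}$ is fundamental, which will give injectivity of the embedding for free.

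First I would establish the continuous embedding, i.e. a bound $\|\mathbf f\|_{SD^p}\le C\|\mathbf f\|_q$ with $C=C(n,p,q)$, and the heart of this is the uniform estimate $\sup_k|F_k(\mathbf f)|\le C\|\mathbf f\|_q$. For $q=\iy$ it is immediate: $|F_k(\mathbf f)|\le\|\mathbf f\|_\iy\int_{\mathbf B_k}|\mcE_k|\,d\la_n\le v\,\|\mathbf f\|_\iy$. For $1\le q<\iy$ I would run exactly the convexity argument of Theorem 1.1: treating $|\mcE_k|\,d\la_n$ as a measure of total mass $\le v$ on $\mathbf B_k$ and using convexity of $t\mapsto t^q$ (Jensen), one gets $|F_k(\mathbf f)|^q\le v^{\,q-1}\int_{\mathbf B_k}|\mathbf f|^q\,|\mcE_k|\,d\la_n\le v^{\,q-1}\|\mathbf f\|_q^q$, hence again $\sup_k|F_k(\mathbf f)|\le C\|\mathbf f\|_q$. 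Since $\sum_k t_k=1$, for $1\le p<\iy$ this yields $\|\mathbf f\|_{SD^p}^p=\sum_k t_k|F_k(\mathbf f)|^p\le(C\|\mathbf f\|_q)^p$, and for $p=\iy$ it yields $\|\mathbf f\|_{SD^\iy}=\sup_k|F_k(\mathbf f)|\le C\|\mathbf f\|_q$; in all cases the embedding is continuous.

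Next I would show that the embedding is well defined into the completion and has dense range, treating here $1\le p<\iy$. For $\mathbf f\in\mathbf L^q$ take the spatial truncations $\mathbf f_m=\mathbf f\,\chi_{\{|\mathbf x|\le m\}}\in\mathbf L^p\cap\mathbf L^q$. When $q<\iy$, Step~1 gives $\|\mathbf f-\mathbf f_m\|_{SD^p}\le C\|\mathbf f-\mathbf f_m\|_q\to0$; when $q=\iy$, I argue on the weights: $F_k(\mathbf f-\mathbf f_m)=0$ once $\mathbf B_k\subset\{|\mathbf x|\le m\}$, while in general $|F_k(\mathbf f-\mathbf f_m)|\le v\,\|\mathbf f\|_\iy$, so
\[
\|\mathbf f-\mathbf f_m\|_{SD^p}^p\le (v\,\|\mathbf f\|_\iy)^p\sum_{k\,:\,\mathbf B_k\not\subset\{|\mathbf x|\le m\}}t_k .
\]
The index sets $\{k:\mathbf B_k\not\subset\{|\mathbf x|\le m\}\}$ decrease to $\es$ as $m\to\iy$ because every $\mathbf B_k$ is bounded, so by dominated convergence against the summable sequence $\{t_k\}$ the right-hand side tends to $0$. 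In either case $\{\mathbf f_m\}$ is $SD^p$-Cauchy, its limit is the image of $\mathbf f$, the map is injective since $\{F_k\}$ is fundamental, and it is continuous by Step~1. For density I note that $\C_c^\iy[\R^n]^n\subset\mathbf L^p\cap\mathbf L^q$ is $\|\cdot\|_p$-dense in $\mathbf L^p$, hence $SD^p$-dense in $\mathbf L^p$ by Step~1 (with $q=p$), hence dense in $SD^p$; since these functions also lie in $\mathbf L^q$, the range of $\mathbf L^q$ is $SD^p$-dense.

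The step I expect to be the genuine obstacle is the case $p=\iy$. There the $SD^\iy$-norm is $\sup_k|F_k(\cdot)|$ rather than a weighted sum, so the summability of $\{t_k\}$ that drives the dominated-convergence argument above is simply unavailable: spatial truncations of an $\mathbf L^\iy$ function need not converge in $SD^\iy$, and approximating a general element of $\mathbf L^\iy$ by functions of lower $\mathbf L^q$ in the $\sup$-norm is delicate precisely because the far-away cubes of every fixed scale contribute uniformly. This case therefore requires a separate argument, and is where I would concentrate the real work.
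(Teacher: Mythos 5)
For $1\le p<\iy$ your proof is correct and is, at its core, the paper's own argument: the paper runs exactly your H\"older/Jensen estimate through the measure $\lt|\mcE_k\rt|d\la_n$ to get $\sup_k|F_k(\mathbf f)|\le C\|\mathbf f\|_q$, then uses $\sum_k t_k=1$; what you add (well-definedness of the embedding into the completion, injectivity from fundamentality, and density via $\C_c^\iy$) is material the paper passes over with ``by construction \dots contains ${\bf L}^p$ densely.'' One small repair on this range: when $q<p$ your spatial truncations $\mathbf f\,\chi_{\{|\mathbf x|\le m\}}$ need not lie in $\mathbf L^p$ (an $\mathbf L^q$ function can fail to be $p$-integrable locally), so truncate in value as well, $\mathbf f_m=\mathbf f\,\chi_{\{|\mathbf x|\le m\}}\chi_{\{|\mathbf f|\le m\}}$; your estimates then go through verbatim.

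Where you and the paper genuinely part ways is $p=\iy$: the paper disposes of it with the sentence ``The case $p=\iy$ is obvious,'' and that is precisely where it is wrong, so your decision to flag this case is sound --- but the real work you anticipate is not a proof, it is a counterexample. Split the claim in two. The continuous-embedding half you can already finish with your own tools: for $\mathbf f\in\mathbf L^q$, $q<\iy$, the value truncations $\mathbf f_m=\mathbf f\,\chi_{\{|\mathbf f|\le m\}}\in\mathbf L^\iy\cap\mathbf L^q$ satisfy $\|\mathbf f-\mathbf f_m\|_{SD^\iy}\le C\|\mathbf f-\mathbf f_m\|_q\to0$ by your uniform estimate, so $\mathbf L^q$ does embed continuously into $SD^\iy$. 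The density half is false. Take $\mathbf g=(1,0,\dots,0)\in\mathbf L^\iy$ and write $F_{(1,i)}$ for the functional attached to the scale-one cube ${\mathbf B}_1(\mathbf x^i)$ in the paper's enumeration of $\N\times\N$. Since $\xi_1^i(t)=\tfrac1n e^{i(t-x^i)}$ on $I_1^i$, a direct computation gives
\[
F_{(1,i)}(\mathbf g)=\lt(\tfrac{2\sin\e_1}{n}\rt)(2\e_1)^{n-1}=:c_1>0,\qquad\text{independently of }i.
\]
Now let $\mathbf h\in\mathbf L^q\cap\mathbf L^\iy$ with $q<\iy$, and choose pairwise disjoint scale-one cubes ${\mathbf B}_1(\mathbf x^{i_j})$ marching to infinity; then $\sum_j\int_{{\mathbf B}_1(\mathbf x^{i_j})}|\mathbf h|^q\,d\la_n\le\|\mathbf h\|_q^q<\iy$ forces $F_{(1,i_j)}(\mathbf h)\to0$, whence
\[
\|\mathbf g-\mathbf h\|_{SD^\iy}\;\ge\;\limsup_{j\to\iy}\lt|F_{(1,i_j)}(\mathbf g)-F_{(1,i_j)}(\mathbf h)\rt|\;=\;c_1.
\]
Every element of the image of $\mathbf L^q$ in $SD^\iy$ is by construction a limit of such $\mathbf h$, so the closure of that image stays at distance at least $c_1$ from $\mathbf g$: for $p=\iy$ and $q<\iy$ the embedding is continuous but \emph{not} dense. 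Your diagnosis that ``the far-away cubes of every fixed scale contribute uniformly'' is exactly the mechanism; the correct conclusion is that the theorem, as stated, holds only on the range $1\le p<\iy$ (plus the trivial case $p=q=\iy$), and no amount of further work at $p=\iy$ will change that.
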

\begin{proof} As in the previous theorem, by construction ${SD}^p [{\mathbb{R}}^n ]$
contains ${\bf{L}}^p [{\mathbb{R}}^n ]$ densely, so we need only show that 	
${SD}^p [{\mathbb{R}}^n ] \supset {\bf{L}}^q [{\mathbb{R}}^n ]$
for $q \ne p$.  First, suppose that $p< \infty$.  If ${\bf f} \in {\bf{L}}^q [{\mathbb{R}}^n ]$ and $q < \infty $, we have 
\[
\begin{gathered}
 \left\| {\bf f} \right\|_{{SD}^p}  = \left[ {\sum\nolimits_{k = 1}^\infty  {t_k } \left| {\int_{{\mathbb{R}}^n } {{\mathcal{E}}_k ({\mathbf{x}}) \cdot {\bf f}({\mathbf{x}})d\la_n({\mathbf{x}})} } \right|^{\frac{{qp}}{q}} } \right]^{1/p}  \hfill \\
{\text{       }} \leqslant \left[ {\sum\nolimits_{k = 1}^\infty  {t_k } \left( {\int_{{\mathbb{R}}^n } {\lt|{\mathcal{E}}_k ({\mathbf{x}})\rt|^q \left| {{\bf f}({\mathbf{x}})} \right|^q d\la_n({\mathbf{x}})} } \right)^{\frac{p}{q}} } \right]^{1/p}  \hfill \\
{\text{      }} \leqslant \sup _k \left( {\int_{{\mathbb{R}}^n } {\lt|{\mathcal{E}}_k ({\mathbf{x}})\rt|^q \left| {{\bf f}({\mathbf{x}})} \right|^q d\la_n({\mathbf{x}})} } \right)^{\frac{1}
{q}}  \leqslant \left\| f \right\|_q . \hfill \\ 
\end{gathered} 
\]
Hence, $f \in {SD}^p [{\mathbb{R}}^n ] $.  For $q = \infty $, we have 
\[
\begin{gathered}
  \left\| {\bf f} \right\|_{{SD}^p}  = \left[ {\sum\nolimits_{k = 1}^\infty  {t_k } \left| {\int_{{\mathbb{R}}^n } {{\mathcal{E}}_k ({\mathbf{x}}) \cdot {\bf f}({\mathbf{x}})d\la_n({\mathbf{x}})} } \right|^p } \right]^{1/p}  \hfill \\
  {\text{       }} \leqslant \left[ {\left[ {\sum\nolimits_{k = 1}^\infty  {t_k [vol({\mathbf{B}}_k )]^p } } \right][ess\sup \left| {\bf f} \right|]^p } \right]^{1/p}  \leqslant M\left\| f \right\|_\infty  . \hfill \\ 
\end{gathered} 
\]
Thus ${\bf f} \in {SD}^p [{\mathbb{R}}^n ] $, and ${\bf{L}}^\infty[{\mathbb{R}}^n ] \subset {SD}^p [{\mathbb{R}}^n ]$.   The case $p=\infty$ is obvious.
\end{proof}
\begin{thm} For ${SD}^p$, $1\leq p \leq \infty$, we have: 
\begin{enumerate}
\item If $p^{ - 1}  + q^{ - 1}  = 1$, then the dual space of ${SD}^p[{\mathbb{R}}^n ]$ is ${SD}^q[{\mathbb{R}}^n ]$.
\item The test function space ${\mcD}[{\mathbb{R}}^n ]$ is contain in ${SD}^{p}[{\mathbb{R}}^n ]$ as a continuous dense embedding.
\item If $K$ is a weakly compact subset of ${\bf{L}}^p[{\mathbb{R}}^n ]$, it is a strongly compact subset of ${SD}^{p}[{\mathbb{R}}^n ]$.
\item The space $ {SD}^{\infty}[{\mathbb{R}}^n ] \subset {SD}^p[{\mathbb{R}}^n ]$.
\end{enumerate}
\end{thm}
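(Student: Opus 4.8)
The plan is to read every statement off the isometry built into the definition. Writing $F_k(\mathbf f)=\int_{\R^n}\mcE_k(\mathbf x)\cdot \mathbf f(\mathbf x)\,d\la_n(\mathbf x)$, the map $\mathcal J_p:\mathbf f\mapsto (t_k^{1/p}F_k(\mathbf f))_{k\ge 1}$ is a linear isometry of $\mathbf L^p$ into $\ell^p$ (into $\ell^\infty$, with the weights $t_k$ replaced by $1$, when $p=\iy$), simply because $\|\mathbf f\|_{SD^p}^p=\sum_k t_k|F_k(\mathbf f)|^p$. Since ${SD}^p[\R^n]$ is by definition the completion of $\mathbf L^p$ in this norm, $\mathcal J_p$ extends to an isometry of ${SD}^p[\R^n]$ onto the closure of its range in $\ell^p$, and the natural bilinear form $(\mathbf f,\mathbf g)=\sum_{k=1}^{\iy}t_k F_k(\mathbf f)\overline{F_k(\mathbf g)}$ — which for $p=2$ is exactly the inner product defining ${SD}^2[\R^n]$ — becomes the canonical pairing of sequences.

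For (1), observe that when $p^{-1}+q^{-1}=1$ the identity $t_k=t_k^{1/p}t_k^{1/q}$ makes $(\mathbf f,\mathbf g)$ precisely the canonical $\ell^p$--$\ell^q$ pairing of $\mathcal J_p\mathbf f$ with $\mathcal J_q\mathbf g$, so H{\"o}lder's inequality yields $|(\mathbf f,\mathbf g)|\le\|\mathbf f\|_{SD^p}\|\mathbf g\|_{SD^q}$ and each $\mathbf g\in{SD}^q[\R^n]$ defines a functional of norm at most $\|\mathbf g\|_{SD^q}$. To identify the full dual isometrically I would prove that $\mathcal J_p$ is onto $\ell^p$ for every finite $p$ (and onto $c_0$ for $p=\iy$), whence ${SD}^p[\R^n]\cong\ell^p$ and the standard duality $(\ell^p)^*=\ell^q$ transports back to $({SD}^p[\R^n])'={SD}^q[\R^n]$ under the pairing above. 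I expect this surjectivity to be the main obstacle. Linear independence of $F_1,\dots,F_N$ (their $\mcE_k$ have distinct supports) shows that the first $N$ coordinates can be matched arbitrarily, but one must then control the residual tail $\sum_{k>N}t_k|F_k(\mathbf f)|^p$ of the matching function, which is where the real work lies; the endpoint $p=\iy$ additionally requires that the range close up inside $c_0$, so that the dual comes out as ${SD}^1[\R^n]$ rather than as a piece of $(\ell^\infty)^*$.

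Part (4) is immediate from $\sum_k t_k=1$: for $\mathbf f\in\mathbf L^\iy$ one has $\|\mathbf f\|_{SD^p}^p=\sum_k t_k|F_k(\mathbf f)|^p\le(\sup_k|F_k(\mathbf f)|)^p\sum_k t_k=\|\mathbf f\|_{SD^\iy}^p$, so the identity extends to a continuous inclusion ${SD}^\iy[\R^n]\ci{SD}^p[\R^n]$. For (2) I would reuse the estimate already established for ${SD}^2[\R^n]$: if $\phi_j\to\phi$ in $\mcD[\R^n]$ then the differences are supported in a fixed compact $K$ and converge uniformly, and bounding $\int|\mcE_k|$ by the volume of a cube $\mathbf B_k$ (hence uniformly) while using $\sum_k t_k=1$ gives $\|\phi_j-\phi\|_{SD^p}\le M\sup_{\mathbf x\in K}|\phi_j(\mathbf x)-\phi(\mathbf x)|\to0$; density then follows from the density of $\C_c^\iy[\R^n]$ in $\mathbf L^p$ ($p<\iy$) together with the density of $\mathbf L^p$ in ${SD}^p[\R^n]$, the case $p=\iy$ being handled in the weaker ${SD}^\iy$ topology.

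Finally, (3) rests on the complete continuity of the inclusion $\mathbf L^p\hookrightarrow{SD}^p[\R^n]$. If $\mathbf f_j\rightharpoonup\mathbf f$ weakly in $\mathbf L^p$ then, since each $\mcE_k\in\mathbf L^q$, $F_k(\mathbf f_j)\to F_k(\mathbf f)$ for every $k$, while the uniform bound $\sup_j\|\mathbf f_j\|_p<\iy$ makes $|F_k(\mathbf f_j-\mathbf f)|$ bounded uniformly in $j$ and $k$. Splitting $\|\mathbf f_j-\mathbf f\|_{SD^p}^p=\sum_{k\le N}t_k|F_k(\mathbf f_j-\mathbf f)|^p+\sum_{k>N}t_k|F_k(\mathbf f_j-\mathbf f)|^p$, the tail is $O(2^{-N})$ uniformly in $j$ and the head tends to $0$ as $j\to\iy$ for fixed $N$, so $\mathbf f_j\to\mathbf f$ strongly in ${SD}^p[\R^n]$. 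Given a weakly compact $K\ci\mathbf L^p$, the Eberlein--{\v S}mulian theorem makes $K$ weakly sequentially compact, so every sequence in $K$ has a subsequence converging weakly in $\mathbf L^p$ and hence strongly in ${SD}^p[\R^n]$; therefore $K$ is norm compact in ${SD}^p[\R^n]$.
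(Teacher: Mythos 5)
You should know at the outset that the paper contains \emph{no proof} of this theorem: it is stated and then immediately followed by the Application section, so there is nothing of the author's to compare your argument against, and your proposal must stand on its own. Its core device --- realizing $SD^p[\R^n]$ isometrically as the closure of the range of $\mathcal J_p:\mathbf f\mapsto (t_k^{1/p}F_k(\mathbf f))_k$ inside $\ell^p$ --- is sound and genuinely useful: it settles part (4) completely (including injectivity of the extended inclusion, since an element whose image in the weighted sequence space vanishes has all coordinates $F_k$ equal to zero), and your arguments for parts (2) and (3) are correct in the range $1\le p<\iy$.

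Two genuine gaps remain. The first is part (1), and you have named it yourself: everything reduces to whether $\mathcal J_p(\mathbf L^p)$ is dense in $\ell^p$, and you do not prove this; it is the crux, not a loose end. H\"older's inequality only yields the embedding $SD^q[\R^n]\hookrightarrow (SD^p[\R^n])'$. If $V=\overline{\mathcal J_p(\mathbf L^p)}$ were a proper closed subspace of $\ell^p$, the dual would be the quotient $\ell^q/V^{\perp}$, and identifying that with $SD^q[\R^n]$ would require showing $\ell^q=\mathcal J_q(SD^q)+V^{\perp}$, which neither you nor the paper addresses. Density of the range is far from obvious: two functionals $F_k$ at the same scale with nearby rational centers are nearly proportional on $\mathbf L^p$-bounded sets, so matching a prescribed finite coordinate pattern forces large $\mathbf L^p$ norm, whose effect on the tail must then be controlled. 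Your fallback at the endpoint --- that the range should close up inside $c_0$ when $p=\iy$ --- is actually false: a nonzero constant field $\mathbf c$ has $F_k(\mathbf c)$ equal to the same nonzero value for every cube at a fixed scale (the phase in $\xi_k^i$ is centered at $x^i$, so the center drops out), hence $\mathcal J_\iy(\mathbf c)\notin c_0$.

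The second gap is the endpoint $p=\iy$ in parts (2) and (3), where your arguments do not apply and where the claims themselves appear to be false, so no repair is possible. For (3): your tail bound $\sum_{k>N}t_k|F_k|^p=O(2^{-N})$ uses the weights $t_k$, which are absent from $\left\| \cdot \right\|_{SD^\iy}=\sup_k|F_k(\cdot)|$. Indeed, take $\mathbf f_j=(\mathbf 1_{R_j},0,\dots,0)$ with $R_j=[3j,3j+1]\times[0,1]^{n-1}$. These are disjointly supported with $\|\mathbf f_j\|_\iy=1$, so for any $\Lambda\in(\mathbf L^\iy)'$ and unimodular scalars one gets $\sum_j|\Lambda(\mathbf f_j)|\le C\|\Lambda\|$; hence $\Lambda(\mathbf f_j)\to 0$ for every $\Lambda$, i.e.\ $\mathbf f_j\rightharpoonup 0$ weakly, and by Eberlein--\v{S}mulian the weak closure of $\{\mathbf f_j\}\cup\{0\}$ is weakly compact in $\mathbf L^\iy$. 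Yet a scale-one cube (edge $\pi/3>1$) centered at a rational point near the center of $R_j$ contains essentially all of $R_j$ and misses $R_m$ for $m\ne j$, giving $|F_k(\mathbf f_j-\mathbf f_m)|=|F_k(\mathbf f_j)|\ge \tfrac{2\sin(1/2)}{n}-\e>0$; so the family has no $SD^\iy$-convergent subsequence and is not strongly compact. For (2): the clause ``handled in the weaker $SD^\iy$ topology'' does not survive scrutiny, since for a constant field $\mathbf c$ with $\sum_j c_j\ne 0$ and any $\phi\in\mcD[\R^n]$, choosing a cube disjoint from the support of $\phi$ gives $\|\mathbf c-\phi\|_{SD^\iy}\ge |F_k(\mathbf c)|=c_0>0$; thus $\mcD[\R^n]$ is not dense in $SD^\iy[\R^n]$. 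Parts (2) and (3) should be asserted only for $p<\iy$, and you should say so explicitly rather than gesture at the weak topology.
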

\section{Application}
Let ${[L^2({\mathbb{R}}^3)]^3}$ be the Hilbert space of square integrable functions on ${\mathbb {R}}^3$,  let ${\mathbb {H}}[ {\mathbb {R}}^3 ]$ be the completion of the  set of functions in $\left\{ {{\bf{u}} \in \mathbb {C}_0^\infty  [ {\mathbb {R}}^3 ]^3 \left. {} \right|\,\nabla  \cdot {\bf{u}} = 0} \right\}$ which vanish at infinity with respect to the inner product of ${[L^2({\mathbb{R}}^3)]^3 }$.  The classical Navier-Stokes initial-value problem (on $ \mathbb{R}^3 {\text{ and all }}T > 0$) is to find a  function ${\mathbf{u}}:[0,T] \times {\mathbb {R}}^3  \to \mathbb{R}^3$ and $p:[0,T] \times {\mathbb {R}}^3  \to \mathbb{R}$ such that
\beqn
\begin{gathered}
  \partial _t {\mathbf{u}} + ({\mathbf{u}} \cdot \nabla ){\mathbf{u}} - \nu \Delta {\mathbf{u}} + \nabla p = {\mathbf{f}}(t){\text{ in (}}0,T) \times {\mathbb {R}}^3 , \hfill \\
  {\text{                              }}\nabla  \cdot {\mathbf{u}} = 0{\text{ in (}}0,T) \times {\mathbb {R}}^3 {\text{ (in the weak sense),}} \hfill \\
    {\text{                              }}{\mathbf{u}}(0,{\mathbf{x}}) = {\mathbf{u}}_0 ({\mathbf{x}}){\text{ in }}{\mathbb {R}}^3. \hfill \\ 
\end{gathered} 
\eeqn
The equations describe the time evolution of the fluid velocity ${\mathbf{u}}({\mathbf{x}},t)$ and the pressure $p$ of an incompressible viscous homogeneous Newtonian fluid with constant viscosity coefficient $\nu $ in terms of a given initial velocity ${\mathbf{u}}_0 ({\mathbf{x}})$ and given external body forces ${\mathbf{f}}({\mathbf{x}},t)$.  

Let $\mathbb{P}$ be the (Leray) orthogonal projection of 
$(L^2 [ {\mathbb {R}}^3 ])^3$ 
onto ${{\mathbb{H}}}[ {\mathbb {R}}^3]$ and define the Stokes operator by:  $ {\bf{Au}} = : -\mathbb{P} \Delta {\bf{u}}$, 
for ${\bf{u}} \in D({\bf{A}}) \subset {\mathbb{H}}^{2}[ {\mathbb {R}}^3]$, the domain of ${\bf{A}}$.      If we apply $\mathbb{P}$ to equation (2.1), with 
${{B}}({\mathbf{u}},{\mathbf{u}}) = \mathbb{P}({\mathbf{u}} \cdot \nabla ){\mathbf{u}}$, we can recast equation (2.1) into the standard form:
\beqn
\begin{gathered}
  \partial _t {\mathbf{u}} =  - \nu {\mathbf{Au}} - {{B}}({\mathbf{u}},{\mathbf{u}}) + \mathbb{P}{\mathbf{f}}(t){\text{ in (}}0,T) \times \R^3 , \hfill \\
  {\text{                              }}{\mathbf{u}}(0,{\mathbf{x}}) = {\mathbf{u}}_0 ({\mathbf{x}}){\text{ in }}\R^3, \hfill \\ 
\end{gathered} 
\eeqn
where the orthogonal complement of ${\Ha} $ relative to $\{{L}^{2}(\R^3)\}^3, \;  \{ {\mathbf{v}}\,:\;{\mathbf{v}} = \nabla q,\;q \in \Ha^1[\R^3] \}$, is used to eliminate the pressure term (see Galdi \cite{GA} or [\cite{SY}, \cite{T1},\cite{T2}]). 
\begin{Def}  We say that a velocity vector field in $\R^3$ is \underline{\rm reasonable} if for  $0 \le t<\iy$, there is a continuous function $m(t)>0$, depending only on $t$ and a constant $M_0$, which may depend on ${\bf u}(0)$ and $f$, such that 
\[
0 < m(t) \leqslant \left\| { {\mathbf{u}}(t)} \right\|_{{\mathbb{H}}} \le M_0.
\] 
\end{Def}
The above definition formalizes the requirement that the fluid has nonzero, but bounded positive definite energy.  However, this condition still allows the velocity to approach zero at infinity in a weaker norm.
\subsection{The Nonlinear Term: A Priori Estimates}
The difficulty in proving the existence and uniqueness of global-in-time strong solutions for equation (2.2) is directly linked to the problem of getting good a priori estimates for the nonlinear term ${{B}}({\mathbf{u}},{\mathbf{u}})$.  Let $\Ha_{sd}$ be the closure of $D({\bf{A}}) \cap SD^2[\R^3]$ in the $SD^2$ norm.  
\begin{thm}   If $\bf A$ is the Stokes operator and  ${\bf u}({\bf x},t) \in D({\bf{A}})$ is a reasonable vector field, then
\begin{enumerate}
\item ${\left\langle { \nu{\bf{A}}{\mathbf{u}},{\mathbf{u}}} \right\rangle _{{\mathbb{H}}_{sd} }} = \nu \left\| { {\mathbf{u}} } \right\|_{ {\mathbb{H}}_{sd} }^2$.
\item For ${\bf u}({\bf x},t) \in {\bf SD}^2 \cap D({\bf{A}})$ and each $t \in [0, \iy)$, there exists a constant $M =M({\bf u}({\bf x},0))>0$, such that
\beqn
\left| {\left\langle {B({\mathbf{u}},{\mathbf{u}}),{\mathbf{u}}} \right\rangle _{{\mathbb{H}}_{sd} }} \right| \le M \left\| { {\mathbf{u}} } \right\|_{ {\mathbb{H}}_{sd} }^3. 
\eeqn
\item
\beqn
\left| {\left\langle {{{B}}({\mathbf{u}},{\mathbf{v}}),{\mathbf{w}}} \right\rangle _{{\mathbb{H}}_{sd}} } \right| \le M \left\| {\bf{u}} \right\|_{{\mathbb{H}}_{sd}} \left\| {\bf{w}} \right\|_{{\mathbb{H}}_{sd}} \left \| {\bf{v}} \right\|_{{\mathbb{H}}_{sd}}. 
\eeqn
\item
\beqn
max\{ \left\| {{{B}}({\mathbf{u}},{\mathbf{v}})} \right\|_{{\mathbb{H}}_{sd}}, \ \left\| {{{B}}({\mathbf{v}},{\mathbf{u}})} \right\|_{{\mathbb{H}}_{sd}} \} \leqslant M \left\| {\mathbf{u}} \right\|_{{\mathbb{H}}_{sd}} \left\| {\mathbf{v}} \right\|_{{\mathbb{H}}_{sd}}. 
\eeqn
\end{enumerate}
\end{thm}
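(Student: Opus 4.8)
The plan is to treat the four estimates as a hierarchy and isolate the genuine content. Estimate (2) is the diagonal case $\mathbf{u}=\mathbf{v}=\mathbf{w}$ of (3), and (3) follows from (4) by a single application of Cauchy--Schwarz in the Hilbert space ${\mathbb{H}}_{sd}$, namely $|\langle B(\mathbf{u},\mathbf{v}),\mathbf{w}\rangle_{{\mathbb{H}}_{sd}}| \le \|B(\mathbf{u},\mathbf{v})\|_{{\mathbb{H}}_{sd}}\,\|\mathbf{w}\|_{{\mathbb{H}}_{sd}}$. Thus the real work is (1) and (4), and I would prove only those two, deducing (2) and (3) as corollaries.

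For (1), I would first use that $\mathbf{u}\in D(\mathbf{A})$ is divergence free, so that the Leray projection fixes it and, being a Fourier multiplier, commutes with the Laplacian; hence $\mathbf{A}\mathbf{u}=-\mathbb{P}\Delta\mathbf{u}=-\Delta\mathbf{u}$. Writing $-\Delta=\sum_{j=1}^{3}(-\partial_j^2)$ and invoking the derivative-invariance identity $(D^\alpha f,g)_{SD}=(-i)^{|\alpha|}(f,g)_{SD}$ with $\alpha=2e_j$ gives $(-\partial_j^2\mathbf{u},\mathbf{u})_{SD}=(\mathbf{u},\mathbf{u})_{SD}$ for each direction $j$. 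Summing the three directions and multiplying by $\nu$ recovers the stated identity up to the dimensional normalization; the essential point, and the reason the identity is phrased this way, is that the remarkable derivative-invariance of the $SD$ inner product makes the viscous (Stokes) term exactly coercive, with no loss of derivatives.

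For (4), the starting point is the definition of the norm together with $\sum_k t_k=1$: $\|B(\mathbf{u},\mathbf{v})\|_{SD}^2=\sum_k t_k\bigl|\int_{\mathbb{R}^3}\mathcal{E}_k\cdot\mathbb{P}(\mathbf{u}\cdot\nabla)\mathbf{v}\,d\lambda_3\bigr|^2\le\sup_k\bigl|\int_{\mathbb{R}^3}\mathcal{E}_k\cdot\mathbb{P}(\mathbf{u}\cdot\nabla)\mathbf{v}\,d\lambda_3\bigr|^2$. For each fixed $k$ I would move the projection onto the test field by self-adjointness, $\int\mathcal{E}_k\cdot\mathbb{P}(\mathbf{u}\cdot\nabla)\mathbf{v}=\int(\mathbb{P}\mathcal{E}_k)\cdot(\mathbf{u}\cdot\nabla)\mathbf{v}$, and then integrate by parts, using $\nabla\cdot\mathbf{u}=0$ to annihilate the term in which the derivative falls on $\mathbf{u}$. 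This yields the antisymmetric form $-\int[(\mathbf{u}\cdot\nabla)(\mathbb{P}\mathcal{E}_k)]\cdot\mathbf{v}$, in which the derivative now sits on the fixed test field. A Cauchy--Schwarz estimate bounds this by $\|\nabla(\mathbb{P}\mathcal{E}_k)\|_{\infty}\,\|\mathbf{u}\|_{L^2}\|\mathbf{v}\|_{L^2}$; taking the supremum over $k$ and converting the $L^2={\mathbb{H}}$ norms into ${\mathbb{H}}_{sd}$ norms by means of the reasonable-field hypothesis produces the constant $M$, which depends on the initial data through the energy bound $M_0$.

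The hard part will be the final step of (4): the reverse comparison of norms. The embedding ${\mathbb{H}}\hookrightarrow SD^2$ runs in the easy direction, $\|\cdot\|_{SD}\le C\|\cdot\|_{{\mathbb{H}}}$, whereas the estimate requires the strong ${\mathbb{H}}$ norm to be dominated by the weak $SD$ norm, which is false in general and can be salvaged only on the class of reasonable fields, where the energy is pinched between $m(t)$ and $M_0$; this is exactly why $M$ must be permitted to depend on $\mathbf{u}(\mathbf{x},0)$. A second, more technical difficulty is the uniform bound $\sup_k\|\nabla(\mathbb{P}\mathcal{E}_k)\|_{\infty}<\infty$: the fields $\mathcal{E}_k$ are supported on cubes whose edges $\pi/a_k$ shrink as $k\to\infty$, so a crude derivative estimate diverges, and one must instead exploit that each $\mathcal{E}_k$ is, in its bulk, a frequency-one modulated bump — the same structural feature underlying the derivative-invariance used in (1) — so that the gradient paired against $\mathbf{u}$ and $\mathbf{v}$ remains controlled uniformly in $k$.
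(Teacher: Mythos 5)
Your proposal inverts the paper's architecture, and the comparison is instructive. For (1) you and the paper do essentially the same thing: pair $\mathcal{E}_k$ against $\mathbf{A}\mathbf{u}$, integrate by parts twice, and use the derivative-invariance $\partial_j^2\mathcal{E}_k=-\mathcal{E}_k$ coordinatewise; the only differences are that the paper silently ignores the Leray projection (you dispose of it correctly, since $\Delta$ preserves divergence-free fields) and silently absorbs the dimensional factor $n=3$ that you flag as a "normalization" (the paper's own displayed computation, summed over $j$, actually produces $3\int\mathcal{E}_k\cdot\mathbf{u}$, not $\int\mathcal{E}_k\cdot\mathbf{u}$, so the paper shares this defect without acknowledging it). For the nonlinear estimates the routes genuinely diverge: the paper proves (2) \emph{directly} --- integrating $\langle B(\mathbf{u},\mathbf{u}),\mathcal{E}_k\rangle$ by parts to get a bound by $\|\mathbf{u}\|_{\mathbb{H}}^2$, invoking the reasonable-field hypothesis to assert $\|\mathbf{u}\|_2^2\le\bar{M}\|\mathbf{u}\|_{\mathbb{H}_{sd}}^2$, and then performing a rather opaque substitution involving $\hat{\varepsilon}=\|\vec{\delta}\|_{\mathbb{H}_{sd}}$, the $SD$-norm of the Dirac delta, followed by Schwarz's inequality --- and then disposes of (3) and (4) with the single sentence ``The proofs of (3) and (4) are easy.'' You instead take (4) as primary, get (3) from it by Cauchy--Schwarz in $\mathbb{H}_{sd}$, and get (2) by specializing $\mathbf{u}=\mathbf{v}=\mathbf{w}$; this is logically cleaner, and it actually supplies proofs for the two items the paper leaves unproved.

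Be aware, however, that the two ``hard parts'' you identify are precisely the points the paper also fails to justify, so neither argument closes them. The reverse comparison $\|\cdot\|_{\mathbb{H}}\le C\|\cdot\|_{\mathbb{H}_{sd}}$ is simply asserted in the paper from reasonableness: but reasonableness pins $\|\mathbf{u}(t)\|_{\mathbb{H}}$ between $m(t)$ and $M_0$ and gives \emph{no lower bound} on $\|\mathbf{u}(t)\|_{\mathbb{H}_{sd}}$ (a translating bump keeps constant energy while its $SD$-norm decays), so the claim that the constant depends only on $\mathbf{u}(\mathbf{x},0)$ and $\mathbf{f}$ is not derived there either. Likewise, the uniform control you need on derivatives of $\mathcal{E}_k$ (or $\mathbb{P}\mathcal{E}_k$) rests on the modulated-exponential structure $\xi_k^i(x)=\tfrac{1}{n}e^{i(x-x^i)}$, which holds only in the bulk of each cube and not on the smooth transition shells, where gradients grow like $a_k$; the paper's own Theorem 1.4(3), which both of you invoke, leans on the same idealization. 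In short: your route is different, more complete in structure, and no less rigorous than the paper's, but the genuine analytic gaps are common to both.
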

\begin{proof} From the definition of the inner product, we have
\beqa
{\left\langle {\nu {\bf{A}}{\mathbf{u}},{\mathbf{u}}} \right\rangle _{{\mathbb{H}}_{sd} }}=\nu
 \sum\nolimits_{k = 1}^\infty  {t_k } \left[ {\int_{\mathbb{R}^n } {{\mathcal{E}}_k ({\mathbf{x}})\cdot {\bf{A}}{\mathbf{u}}({\mathbf{x}})d{\la_n(\bf{x})}} } \right]\left[ {\int_{\mathbb{R}^n } {{\mathcal{E}}_k ({\mathbf{y}})\cdot {\mathbf{u}}({\mathbf{y}})d{\la_n(\bf{y})}} } \right].  
\eeqa
Using the fact that ${\bf{u}} \in D({\bf A})$, it follows that
\[
\begin{gathered} 
{\int_{\mathbb{R}^n } {{\mathcal{E}}_k ({\mathbf{y}}) \cdot {\partial _{y_j }^2 }{\mathbf{u}}({\mathbf{y}})d{\la_n(\bf{y})}} }= {\int_{\mathbb{R}^n } {\partial _{y_j }^2} {{\mathcal{E}}_k ({\mathbf{y}}) \cdot{\mathbf{u}}({\mathbf{y}})d{\la_n(\bf{y})}} } \hfill \\
= {\int_{I_{i} } {\partial _{y_j }^2} {\left( {\xi _l^i(y_1) ,\xi _l^i(y_2) , \cdots \xi _l^i(y_n) } \right) \cdot{\mathbf{u}}({\mathbf{y}})d{\la_n(\bf{y})}} }=(-i)^2{\int_{\mathbb{R}^n } {{\mathcal{E}}_k ({\mathbf{y}}) \cdot {\mathbf{u}}({\mathbf{y}})d{\la_n(\bf{y})}} }.  \hfill \\
\end{gathered}
\]
Using this in the above equation and summing on $j$, we have (${\bf A }=-\mathbb{P} \De$) 
\[ 
{\int_{\mathbb{R}^n } {{\mathcal{E}}_k ({\mathbf{y}}) \cdot {\bf A }{\mathbf{u}}({\mathbf{y}})d{\la_n(\bf{y})}} } ={\int_{\mathbb{R}^n } {{\mathcal{E}}_k ({\mathbf{y}}) \cdot{\mathbf{u}}({\mathbf{y}})d{\la_n(\bf{y})}} }.
\]
It follows that
\beqa
\begin{gathered}
{\left\langle { {\bf{A}}{\mathbf{u}},{\mathbf{u}}} \right\rangle _{{\mathbb{H}}_{sd} }}\hfill \\
= \sum\nolimits_{k = 1}^\infty  {t_k } \left[ {\int_{\mathbb{R}^n } {{\mathcal{E}}_k ({\mathbf{x}})\cdot {\mathbf{u}}({\mathbf{x}})d{\la_n(\bf{x})}} } \right]\left[ {\int_{\mathbb{R}^n } {{\mathcal{E}}_k ({\mathbf{y}})\cdot {\mathbf{u}}({\mathbf{y}})d{\la_n(\bf{y})}} } \right] \hfill \\
 =\left\| {{\mathbf{u}} } \right\|_{ {\mathbb{H}}_{sd} }^2.  \hfill \\
 \end{gathered}
\eeqa
This proves (1).  To prove (2),  let   $\vec \delta ({\mathbf{x}}) = \left( {\delta (x_1 ), \cdots \delta_k (x_3 )} \right)$ be the $n$-dimensional Dirac delta function and set $\hat{\e}=\lt\|\vec \delta ({\mathbf{x}})\rt\|_{\mathbb{H}_{sd} }$.
We start with  
\[
b({\bf u}, {\bf u}, {\mathcal{E}}_k)= \left| {\left\langle {B({\mathbf{u}},{\mathbf{u}}),{\mathcal{E}}_k} \right\rangle _{{\mathbb{H}}_{sd} }} \right|=
\left| {\int_{\mathbb{R}^3 } {\left( {{\mathbf{u}}({\mathbf{x}}) \cdot \nabla } \right){\mathbf{u}}({\mathbf{x}}) \cdot {\mathcal{E}}_k({\mathbf{x}})d{\la_n(\bf{x})}} } \right|
\]
and integrate by parts, to get 
\[
\left| {\int_{\mathbb{R}^3 } {\left\{ {\sum\nolimits_{i = 1}^3 {u_i ({\mathbf{x}})^2 \mcE_k^i ({\mathbf{x}})d{\la_n(\bf{x})}} } \right\}} } \right| \leqslant \mathop {\sup }\limits_k \left\| {\mcE_k } \right\|_\infty  \left\| {\mathbf{u}} \right\|_\Ha^2  \le  \left\| {\mathbf{u}} \right\|_\Ha^2 .
\]
Since ${\bf{u}}$ is reasonable, there is a constant  $\bar{M}$  depending on ${\bf{u}}(0)$ and $f$,  such that  $\left\| {\mathbf{u}} \right\|_2^2 \le \bar{M} \left\| {\mathbf{u}} \right\|_{\mathbb{H}_{sd} }^2$.   We now have
\[
\begin{gathered}
  \left| {\left\langle {B({\mathbf{u}},{\mathbf{u}}),{\mathbf{u}}} \right\rangle _{\mathbb{H}_{sd} } } \right|  \hfill \\
  = \left| {\sum\nolimits_{k= 1}^\infty  {t_k } \left[ {\int_{\mathbb{R}^3 } {\left( {{\mathbf{u}}({\mathbf{x}}) \cdot \nabla } \right){\mathbf{u}}({\mathbf{x}}) \cdot \mcE_k ({\mathbf{x}})d{\la_n(\bf{x})}} } \right]\left[ {\int_{\mathbb{R}^3 } {{\mathbf{u}}({\mathbf{y}}) \cdot \mcE_k ({\mathbf{y}})d{\la_n(\bf{y})}} } \right]} \right| \hfill \\
   \leqslant \bar{M} \hat{\e}^{-2} \left\| {\mathbf{u}} \right\|_{\mathbb{H}_{sd} }^2 \left| {\sum\nolimits_{k= 1}^\infty  {t_k } \left[ {\int_{\mathbb{R}^3 } {{\vec \delta({\mathbf{x}})} \cdot \mcE_k ({\mathbf{x}})d{\la_n(\bf{x})}} } \right]\left[ {\int_{\mathbb{R}^3 } {{\mathbf{u}}({\mathbf{y}}) \cdot \mcE_k ({\mathbf{y}})d{\la_n(\bf{y})}} } \right]} \right| \hfill \\
   \leqslant {M}\left\| {\mathbf{u}} \right\|_{\mathbb{H}_{sd} }^3, \hfill \\ 
\end{gathered} 
\]
where $M=\bar{M} \hat{\e}^{-1}$ and the third line above follows from Schwartz's inequality.  The proofs of (3) and (4) are easy.
\end{proof}
To compare our results, a typical bound available in the $\Ha$ (or energy) norm for equation (2.5) can be found in Sell and You \cite{SY} (see page 366):  
\[
\max \left\{ {{{\left\| {B({\mathbf{u}},{\mathbf{v}})} \right\|}_\mathbb{H}},\;{{\left\| {B({\mathbf{v}},{\mathbf{u}})} \right\|}_\mathbb{H}}} \right\} \leqslant {C_0}{\left\| {{{\mathbf{A}}^{5/8}}{\mathbf{u}}} \right\|_\mathbb{H}}{\left\| {{{\mathbf{A}}^{5/8}}{\mathbf{v}}} \right\|_\mathbb{H}}.
\]
\section{Conclusion}
We have constructed a new class of separable Banach spaces, $S{D^p}[{\mathbb{R}^n}],\;1 \leqslant p \leqslant \infty$, which contain each $L^p$-space as a dense continuous and compact embedding.  These spaces have the remarkable property that, for any multi-index $\al, \; \left\| {{D^\alpha }{\mathbf{u}}} \right\|_{SD} = \left\| {\mathbf{u}} \right\|_{SD}$.  We have shown that our spaces contain the nonabsolutely integrable functions and the space of test functions ${\mathcal{D}}[{\mathbb{R}^n}]$, as a dense continuous embedding.  We have discussed their basic properties and their relationship to ${\mathcal{D'}}[{\mathbb{R}^n}]$, $\mcS[{\mathbb{R}^n}]$ and $\mcS'[{\mathbb{R}^n}]$. As an application, we have obtained new bounds for the nonlinear term of the Navier-Stokes equation. 

\bibliographystyle{amsalpha}

\begin{thebibliography}{A}
\bibitem[GA]{GA} G. P. Galdi, { \it An introduction to the mathematical theory of the Navier-Stokes equations,}  2nd Edition, Vol. II, Springer Tracts in Natural Philosophy, Vol. ${\mathbf 39}$ Springer, New York, 1997. 
\bibitem[GL]{GL} G. Leoni, { \it  A First Course in Sobolev Spaces,}  AMS Graduate Studies in Math. {\bf 105},  Providence, R.I, 2009. 
\bibitem[GO]{GO} R. A. Gordon, {\it The Integrals of Lebesgue, Denjoy, Perron and Henstock}, Graduate Studies in Mathematics, Vol. 4, Amer. Math. Soc., (1994).
\bibitem[GZ]{GZ} T.L. Gill and W.W. Zachary,   {\it A new class of Banach spaces}, J. Phys. A: Math. Theor. {\bf{41}} (2008) 1-15,  doi:10.1088/1751-8113/41/49/495206.
\bibitem[HS]{HS} R. Henstock, {\it The General Theory of Integration}, Clarendon Press, Oxford, (1991).
\bibitem[J]{J} F. Jones, { \it Lebesgue Integration on Euclidean Space},  Revised Edition, Jones and Bartlett Publishers,  Boston, (2001).
\bibitem [KAA]{KAA} V. Koltchinskii, C.T. Abdallah, M. Ariola, P. Dorato and D. Panchenko,  {\it Improved Sample Complexity Estimates for Statistical Learning Control of Uncertain Systems,} IEEE Trans. Automatic Control {\bf 45}  (2000), 2383-2388.
\bibitem[KW]{KW} J.  Kurzweil, {\it Nichtabsolut konvergente Integrale}, Teubner-Texte z\"{u}r Mathematik, Band {\bf 26}, Teubner Verlagsgesellschaft, Leipzig, (1980).
\bibitem[NI]{NI} H. Niederreiter, {\it Random Number Generation and Quasi-Monte Carlo Methods,} SIAM, (1992).
\bibitem[PT]{PT} A. Papageorgiou and J.G. Traub, {\it Faster Evaluation of Multidimesional Integrals,} Computers in Physics,   Nov., (1997), 574-578.
\bibitem[PTR]{PTR} S. Paskov and J.G. Traub, {\it Faster Valuation of Financial Derivatives,} Journal of Portfolio Management, {\bf 22}  (1995), 113-120.
\bibitem[RS]{RS} M. Reed and B. Simon, {\it Methods of Modern Mathematical Physics I: Functional Analysis}, Academic Press, New York, (1972).
\bibitem[RU]{RU} W. Rudin, {\it Functional Analysis}, McGraw-Hill Press, New York, (1973).
\bibitem [SC]{SC} L. Schwartz, { \it Th\'{e}orie des Destributions ,} Hermann, Paris, 1966.
\bibitem [SY]{SY} G. R. Sell and Y. You, { \it Dynamics of evolutionary equations,} Applied Mathematical Sciences, Vol. ${\mathbf 143}$, Springer, New York, 2002.
\bibitem [T1]{T1} R. Temam, { \it Navier-Stokes Equations, Theory and Numerical Analysis,} AMS Chelsea Pub., Providence, RI, 2001.
\bibitem [T2]{T2} R. Temam, { \it Infinite dimensional dynamical systems in mechanics and physics,} Applied Mathematical Sciences, Vol. ${\mathbf 68}$, Springer, New York, 1988.
\bibitem[TY]{TY} L. Tuo-Yeong, { \it Some full descriptive charaterizations of the Henstock-Kurweil integral in Euclidean space}, Czechoslovak Math. Journal , {\bf 55} (2005), 625-637.
\bibitem[TY1]{TY1} L. Tuo-Yeong,  { \it Henstock-Kurweil Integration on Euclidean Spaces,}  Series in Real Analysis-Vol 12  World Scientific, New Jersey, (2011).
\bibitem[YS]{YS} K. Yosida,  { \it Functional Analysis,}  second ed. Springer-Verlag, New York, (1968).

\end{thebibliography}

\end{document}